\theoremstyle{plain}
\newtheorem{theorem}  {Theorem}  [section]
\newtheorem{lemma}  [theorem]   {Lemma}
\newtheorem{corollary}[theorem] {Corollary}
\newtheorem{fact} [theorem] {Fact}
\newtheorem{proposition} [theorem] {Proposition}
\newtheorem{claim} [theorem] {Claim}
\newtheorem*{claim*} {Claim}
\newtheorem{conjecture}[theorem] {Conjecture}
\newtheorem{algorithm}{Algorithm}
\theoremstyle{definition}
\newtheorem{definition}[theorem] {Definition}
\newtheorem{example}[theorem] {Example}
\newcommand{\claimproof}{\renewcommand{\qedsymbol}{$\diamond$}}
\newcommand{\comp}[1]{{#1^{\rm c}}}
\newcommand{\redcomp}[1]{{\rm red}({#1^{\rm c}})}
\newcommand{\n}{\noindent}
\renewcommand{\setminus}{-}
\newcommand{\iso}[1][]{\buildrel {#1} \over \cong}
\newcommand{\lcov}{\prec}
  \def\R{{R}}
\def\c #1{{\mathcal #1}}
     \def\cC{\c{C}} \def\cD{\c{D}} \def\cE{\c{E}}
   \def\cG{\c{G}}     
  \def\cP{\c{P}}    
   \def\cV{\c{V}}
\let\polishlcross=\l
\def\l{\ifmmode\ell\else\polishlcross\fi}
\def\H{\ifmmode{H}\else\accent"07D\fi}
\DeclareMathOperator{\NU}{NU}
\DeclareMathOperator{\TSI}{TSI}
\DeclareMathOperator{\SL}{SL}
\DeclareMathOperator{\dist}{dist}
\DeclareMathOperator{\PI}{PI}
\DeclareMathOperator{\DL}{DL}
\DeclareMathOperator{\ret}{Ret}
\newcommand\incomp{\parallel}  
\newcommand{\zero}{\mathbf{0}}
\newcommand{\unit}{\mathbf{1}}
\newcommand{\pid}[1]{\bm{\langle} #1 ]}
\newcommand{\pfi}[1]{[ #1 \bm{\rangle}}
\newcommand{\ce}[2]{{#1}^{(#2)}}   
\newcommand{\cesa}{\ce\alpha{i}}
\newcommand{\cesb}{\ce{(\beta + 1)}j}
\newcommand{\ji}[2]{{#1}_{[#2]}}   
\newcommand{\jis}{\ji{\alpha}{i}}   
\newcommand{\mi}[2]{{#1}^{[#2]}}   
\newcommand{\mis}{\mi\beta{j}}   
\newcommand{\nVB}[4]{[\ji{#1}{#2}, \mi{#3}{#4}]} 
\newcommand{\nVBs}{\nVB{\alpha}{i}{\beta}{j}} 
\newcommand{\nEB}[4]{\pfi{\ji{#1}{#2}} \times \pid{\mi{#3}{#4}}} 
\newcommand{\nEBs}{\nEB{\alpha}{i}{\beta}{j}}
\newcommand{\UC}{\bigcup \cC}
\begin{document}
\title{Distributive lattice polymorphism on reflexive graphs}
\author{Mark Siggers}
\address{Kyungpook National University, Republic of Korea}
\email{mhsiggers@knu.ac.kr}
\thanks{Supported by the National Research Foundation (NRF) of Korea
  (2014-06060000)} 
\date{\today}


\begin{abstract}
  In this paper we give two characterisations of the class of reflexive graphs admitting
  {\em distributive lattice polymorphisms} and use these characterisations to address the 
  problem of recognition:  for a reflexive graph $G$ in which no two vertices have the same 
  neighbourhood, we find a polynomial time algorithm to decide if $G$ admits a 
  distributive lattice polymorphism. 
    
\end{abstract} 

\keywords{
  Lattice Polymorphism,  CSP, Distributive Lattice, Reflexive Graph,  Recognition}

\maketitle

\section{Introduction}\label{sect:results}

  \subsection{Motivation}
  It is well known (see for example \cite{BJK}) that Constraint Satisfactions Problems,
  which provide a  formulation for many combinatorial problems, can be stated as the problem 
  of finding a homomorphism between structures. Moreover (\cite{FV98}) any such homomorphism
  problem can be reduced to the retraction problem for {\em reflexive} graphs- graphs in
  which every vertex has a loop. 

  The problem $\ret(G)$ of retraction to $G$ is $NP$-complete for most reflexive graphs $G$ and
  in the case that the problem is known to be polynomial time solvable for some
  $G$, the polynomial time algorithm is tied to the existence of a {\em polymorphism} on 
  $G$--  an operation $f:V(G)^d \to V(G)$ that preserves edges--  satisfying some nice
  identity.
 
  A reflexive graph $G$ is a {\em lattice} graph if 
  it has a {\em compatible lattice}, a lattice on its
  vertex set such that the meet and join operations, $\wedge$ and $\vee$, 
  are polymorphisms of $G$.  It is a {\em distributive lattice graph} or {\em $\DL$-graph}
  if the lattice $L$ is distributive; we call $(G,L)$ a {\em $\DL$-pair}.  

  It was shown in \cite{CDK} that $\ret(G)$ can be
  solved, for a structure $G$, by a linear monadic Datalog program with at most  one extensional predicate per
  rule, (i.e., $G$ has caterpillar duality), if and only if it is a retract of a 
  $\DL$-graph. 

  To give a bit more context, a $d$-ary polymorphism $f:G^d \to G$ is a
  {\em totally symmetric idempotent} ($\TSI$) polymorphism if   
  $f(v,v,\dots, v) = v$ for all
  $v \in V(G)$ and if $f(v_1, \dots, v_d) = f(u_1, \dots, u_d)$ whenever
  $\{v_1, \dots, v_d\} = \{u_1, \dots, u_d\}$.
  The class $\TSI$ of reflexive graphs $G$ admitting $\TSI$ polymorphisms of all aritites is 
  important, as this is
  the class for which $\ret(G)$ can be solved by a monadic Datalog program
  with at most one extensional predicate per rule,
  (i.e., $G$ has tree duality).

  It is of interest to get a graph theoretic characterisation of the class $\TSI$. 
  The two main sources of $\TSI$ polymorphisms, are near-unanimity ($\NU$) polymorphisms, 
  and semilattice ($\SL$) polymorphisms. While $\NU$ polymorphisms have been well studied,
  and the classes of graphs admitting them have several nice characterisations,
  no such study had been attempted for $\SL$ polymorphisms until \cite{HS}, where we
  looked at the family of reflexive graphs admitting $\SL$ polymorphisms.

  A (meet) semilattice ordering on the vertices of a graph defines a $2$-ary operation 
  $\vee$ on the vertex set. If this operation is a polymorphism of the graph then it is
  an {\em $\SL$ polymorphism} of the graph.   
  The problem of characterising reflexive graphs admitting $\SL$ polymorphsims was 
  difficult, and we restricted our attention to those graphs $G$ that admit $\SL$
  polymorphisms for which the Hasse diagram of the semilattice ordering is a tree, and 
  a subgraph of $G$.  We showed that the class of such graphs extends the class of 
  chordal graphs.  We were unable to say much in the case that the semilattice was not
  a tree. The other extreme is when the ordering is a lattice-- the Hasse diagram is not
  a tree except in the  trivial case that the lattice it is a chain.  This leads us to 
  consider lattice polymorphisms.     
   
  \subsection{Results}

  In this paper we give two explicit characterisations of the class of reflexive 
  $\DL$-graphs, and use these characterisations to address the problem of 
  recognition. 

   For our  first characterisation,
   we recall a well known result of Birkhoff \cite{Bi}. 
   For a poset $P$, a subset $D$ is a {\em downset} if
   $b \in D$ and $a \leq b$ implies $a \in D$. The family $\cD(P)$ of all downsets of $P$
   is a distributive lattice under the ordering $\subseteq$. 
   The meet and join operations are $\cap$ and $\cup$, respectively.
   Birkhoff showed that for any distributive lattice $L$, $L$ is isomorphic
   to $\cD(J_L)$ for a unique poset $J_L$, (the poset of join irreducible elements of $L$).    
  
   Viewing a comparability $a \leq b$ as an arc $(a,b)$, a poset $P$ is just
   a transitive acyclic (except for loops) reflexive digraph.
   So we can talk of a sub-digraph $A$ of $P$.   
 
  \begin{definition}[$G(P,A)$]\label{def:GPA}
      For a poset $P$ and a sub-digraph $A$ of $P$, let $G = G(P,A)$ be the graph 
      on $\cD(P)$, in which two downsets $D, D' \in \cD(P)$ are adjacent if
      $A$ contains all arcs $(x,y)$ of $P$ for which $x$ and $y$ are in either
     $D \setminus D'$ or $D' \setminus D$.
  \end{definition}

  See Figure \ref{fig:GPA} for an example. The left side shows a poset $P$ represented by its
  Hasse diagram (defined in Section \ref{sec:def}) in thick light edges, and a sub-digraph
  $A$ in dark edges, missing only the arc $(b,c)$. 
  On the right is the downset lattice $\cD(P)$ again represented by its Hasse diagram, and
  the graph $G(P,A)$ missing only edges between vertices one of which contains $b$ and $c$  and 
  the other of which contains neither of them.

  \begin{figure} 
   \begin{center}
    \begin{picture}(8,5)(0,0)%
    \put(0,0){\includegraphics[width=8cm]{./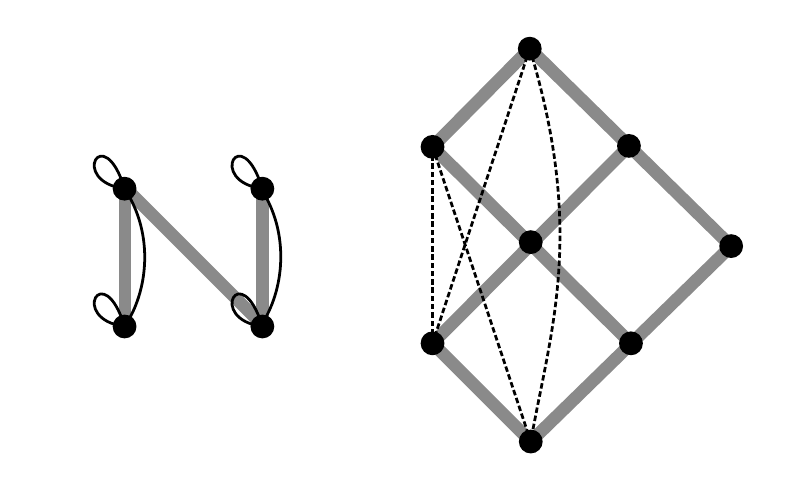}}%
    \put(1.3,0){\smash{$P$ and $A$}}%
    \put(1.1,1.3){\smash{$a$}}%
    \put(2.6,1.3){\smash{$b$}}%
    \put(1.3,3.2){\smash{$c$}}%
    \put(2.7,3.2){\smash{$d$}}%
    \put(4.2,0){\smash{$\cD(P)$ and $\overline{G(P,A)}$}}%
    \put(5.6,0.4){\smash{$\emptyset$}}%
    \put(4.2,1.1){\smash{$a$}}%
    \put(6.3,1.1){\smash{$b$}}%
    \put(5.8,2.4){\smash{$ab$}}%
    \put(7.6,2.4){\smash{$bd$}}%
    \put(4.0,3.7){\smash{$abc$}}%
    \put(6.3,3.7){\smash{$abd$}}%
    \put(5.2,4.7){\smash{$abcd$}}%
  \end{picture}%
\end{center}
\caption{Poset $P$ and lattice $\cD(P)$ in thick light edges. Digraph $A$ and (the complement of) graph $G(P,A)$ in
     dark. }
  \label{fig:GPA}
  \end{figure}

  Our first main theorem, Theorem \ref{thm:char2} says that for any $\DL$-pair $(G,L)$, 
  $G$ is isomorphic to $G(J_L,A)$ for some sub-digraph $A$ of $J_L$.
  Showing, in Lemma \ref{lem:pographiscompat}
  that $G(P,A)$ is always a $\DL$-graph, we get the following characterisation of
  reflexive $\DL$-graphs.    

  \begin{corollary}\label{cor:char2} 
     A reflexive graph $G$ is a $\DL$-graph if and only if there is 
     there is a poset $P$ and a sub-digraph $A$ such that 
     $G \iso  G(P, A)$.
  \end{corollary}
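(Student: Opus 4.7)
The plan is to derive this as a direct consequence of the two main ingredients stated just before it: Theorem \ref{thm:char2}, which produces a representation $G \iso G(J_L,A)$ starting from a $\DL$-pair $(G,L)$, and Lemma \ref{lem:pographiscompat}, which asserts that every graph of the form $G(P,A)$ is a $\DL$-graph. The only work is to stitch these together and verify that the property of being a $\DL$-graph transfers along graph isomorphism.

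For the forward direction, suppose $G$ is a reflexive $\DL$-graph. By definition, there is a distributive lattice $L$ on $V(G)$ such that $(G,L)$ is a $\DL$-pair. Taking $P := J_L$, the poset of join-irreducibles of $L$ furnished by Birkhoff's representation $L \iso \cD(J_L)$ (recalled in the paragraph preceding Definition \ref{def:GPA}), Theorem \ref{thm:char2} supplies a sub-digraph $A$ of $P$ such that $G \iso G(P,A)$, which is exactly what we want.

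For the backward direction, suppose $G \iso G(P,A)$ for some poset $P$ and sub-digraph $A \subseteq P$. By Lemma \ref{lem:pographiscompat}, $G(P,A)$ carries a compatible distributive lattice (namely $\cD(P)$ with meet $\cap$ and join $\cup$), so $G(P,A)$ is a $\DL$-graph. Transporting this lattice across the isomorphism $G \iso G(P,A)$ produces a compatible distributive lattice on $V(G)$: since isomorphisms of reflexive graphs preserve the polymorphism identities, the pulled-back meet and join remain edge-preserving. Hence $G$ is itself a $\DL$-graph, completing the equivalence.

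There is essentially no obstacle here beyond bookkeeping; the entire content of the corollary is already carried by Theorem \ref{thm:char2} and Lemma \ref{lem:pographiscompat}. The only mild subtlety is the transport-of-structure step in the backward direction, but this is routine since being a $\DL$-graph is a property of the isomorphism type of the reflexive graph.
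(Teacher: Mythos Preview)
Your proof is correct and follows exactly the route the paper takes: the forward direction is Theorem~\ref{thm:char2} with $P=J_L$, and the backward direction is Lemma~\ref{lem:pographiscompat} together with the routine transport of the compatible lattice across the isomorphism. The paper simply states that the corollary is immediate from these two results, so your write-up is, if anything, slightly more detailed than the original.
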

 
  In \cite{Ga} it was shown that a graph (without loops) is a {\em proper interval graph} if and
  only if there is an ordering of its vertices such that if it satisfies the so-called
  {\em min-max} identity:
  \begin{equation}\label{id:minmax}
   (u' \leq u \leq v \leq v' \mbox{ and } u' \sim v')  \Rightarrow u \sim v
 \end{equation}
  We take this as our definition of a  proper interval graph in the reflexive context, and say
  a proper interval graph is in {\em min-max form} if its vertices are labelled $\{0,1, \dots, n\}$
  for some $n$ so that it satisfies \eqref{id:minmax}.

  Simple arguments (see Fact \ref{fact:ids}) show that any reflexive graph
  that is compatible with a chain lattice is a proper interval graph.
  As any distributive lattice is embeddable in a product of chains, 
  it follows that any $\DL$-graph is a subgraph of a categorical product
  (defined in Section \ref{sec:def}) of proper interval graphs. 
  In fact Dilworth \cite{Di50} showed, and we recall this in more detail in Section \ref{sec:main2},
  that any chain decomposition of $P$ yields an embedding of $\cD(P)$ into a product of chains.
  The embedding shown in Figure \ref{fig:GPA} comes from the decomposition of $P$ into the chains
  $a\prec c$ and $b \prec d$. Figure \ref{fig:PI} shows the embedding corresponding to the 
  decomposition of $P$ into the chains $a$, $b \prec c$, and $d$.  
  For the embedding in Figure \ref{fig:PI} the graph $G(P,A)$ is an induced subgraph of proper interval
  graphs, in this case paths, on the chain factors. It turns out that this happens when certain edges
  in the complement of $A$ in $P$ are contained in the chain decomposition of $P$.

  For any $\DL$-pair $(G,L)$, we get, in Theorem \ref{thm:embedding},   
  an embedding of $L$ into a product of chains such that $G$ is an induced subgraph of $\cG$. 
  Further, it is an induced subgraph of quite a particular form. 
  
  The vertex set of a product $\cG = \prod_{i = 1}^d G_i$ of proper interval graphs $G_i$ is a set
  of $d$-tuples $x = (x_1, \dots, x_d) \in \prod_{i = 1}^d \{0,1, \dots, n_i\}$ for some 
  $n_1, \dots, n_d$.
  A {\em vertex interval} of $\cG$ is the set 
  \[  \nVBs = \{ x \in V(\cG) \mid  \alpha \leq x_i \mbox{ and } x_j \leq \beta \} \]
  for some $i,j \in [d]$, $\alpha \leq n_i$ and $\beta \leq n_j$. (See right side of 
  Figure \ref{fig:PI}.)

  \begin{figure} 
   \begin{center}
    \begin{picture}(14,5)(0,0)%
    \put(6,0){\includegraphics[width=8cm]{./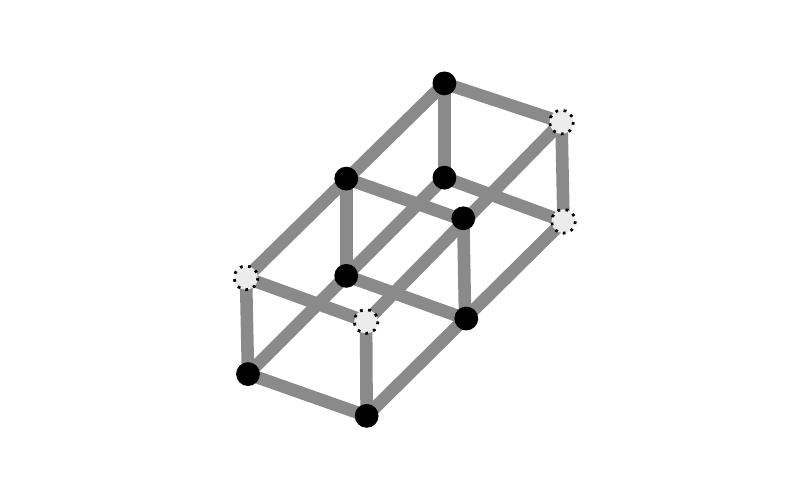}}%
    \put(8.1,-0.1){\smash{$\cP - \nVB1201 - \nVB2103$}}%
    \put(9.7,.4){\smash{$(0,0,0)$}}%
    \put(10.9,1.5){\smash{$(1,0,0)$}}%
    \put(11.9,2.6){\smash{$(2,0,0)$}}%
    \put(11.9,3.7){\smash{$(2,1,0)$}}%
    \put(8, .7){\smash{$(0,0,1)$}}%
    \put(10, 4.4){\smash{$(2,1,1)$}}%
    \put(7.2, 2.0){\smash{$(0,1,1)$}}%
    \put(7.2, 2.0){\smash{$(0,1,1)$}}%
    \put(0,0){\includegraphics[width=8cm]{./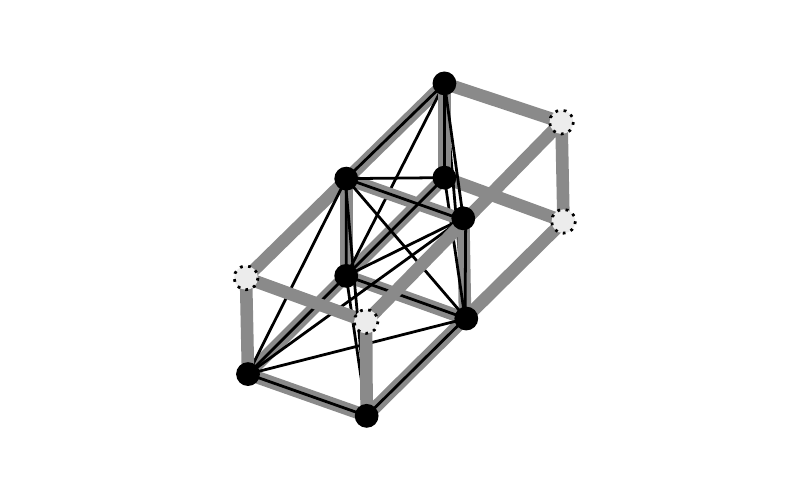}}%
    \put(2.7,-0.1){\smash{$\cD(P)$ and $G(P,A)$}}%
    \put(4,.5){\smash{$\emptyset$}}%
    \put(4.9,1.5){\smash{$b$}}%
    \put(5.9,2.5){\smash{$bc$}}%
    \put(5.9,3.7){\smash{$bcd$}}%
    \put(3,2.1){\smash{$ab$}}%
    \put(2.8,3.1){\smash{$abd$}}%
    \put(4.6,3.25){\smash{$abc$}}%
    \put(4.8,2.5){\smash{$bd$}}%
    \put(2.3, .9){\smash{$a$}}%
    \put(2.1, 2.1){\smash{$d$}}%
    \put(4, 4.4){\smash{$abcd$}}%
   
  \end{picture}%
\end{center}
\caption{Left: The lattice $\cD(P)$ from Figure \ref{fig:GPA} embedded  
          in a product of three chains, and the
          graph $G(P,A)$ from Figure \ref{fig:GPA} embedded as an induced
          subgraph of the product of paths on those chains. Right: The usual labelling
          on the product of chains showing $\cD(P)$ as $\cP - \nVB1201 - \nVB2103$.}      
  \label{fig:PI}
  \end{figure}

  Our following, second, characterisation of reflexive $\DL$-graphs is immediate from
   Theorem \ref{thm:embedding}.  
 
  \begin{corollary}\label{cor:charPI}
   A reflexive graph $G$ is a $\DL$-graph if and only if it is the induced subgraph of a 
   product $\cG = \prod_{i = 1}^dG_i$ of proper interval graphs $G_i$ in min-max form, 
   that we get by removing vertex intervals. 
  \end{corollary}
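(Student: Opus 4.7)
The forward direction is immediate from Theorem~\ref{thm:embedding}, which for any $\DL$-pair $(G,L)$ supplies an embedding of $L$ into a product of chains realising $G$ as an induced subgraph of the corresponding product of proper interval graphs, whose missing vertex set is a union of vertex intervals. So all the work lies in the converse.

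For the converse, I would start with an induced subgraph $G$ of $\cG = \prod_{i=1}^d G_i$, each $G_i$ a proper interval graph in min-max form on $\{0,\dots,n_i\}$, with $V(G) = V(\cG) \setminus \UC$ for some family $\cC$ of vertex intervals. The plan is to pull the natural product-of-chains lattice on $V(\cG) = \prod_i \{0,\dots,n_i\}$ back to $V(G)$ and verify compatibility with $G$. Two preliminary ingredients set this up. First, this product-of-chains lattice, with coordinatewise meet and join, is distributive, and any sublattice of a distributive lattice is again distributive. Second, by Fact~\ref{fact:ids} the coordinatewise $\min$ and $\max$ are polymorphisms of each chain-compatible $G_i$, and hence of $\cG$.

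The crux of the argument is to show that for each vertex interval $\nVBs \in \cC$ the set $V(\cG) \setminus \nVBs$ is a sublattice of $V(\cG)$. Rewriting the complement as $\{x : x_i < \alpha\} \cup \{x : x_j > \beta\}$, I would run a short case analysis: if either of $x,y$ has $i$-coordinate below $\alpha$, then so does $x \wedge y$; if either has $j$-coordinate above $\beta$, then so does $x \vee y$; the remaining subcases (for the meet when both $x, y$ have $j$-coordinate exceeding $\beta$, and dually for the join) are handled symmetrically. Intersecting over $\cC$ then yields that $V(G)$ is a sublattice of $V(\cG)$, and therefore a distributive lattice under the inherited operations.

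Finally, since $G$ is induced in $\cG$ and $V(G)$ is closed under $\wedge$ and $\vee$, these operations restrict to polymorphisms of $G$: they send $V(G) \times V(G)$ into $V(G)$, and preserve edges of $G$ because they preserve edges of $\cG$. Thus $G$ is a $\DL$-graph. The main obstacle is the sublattice-closure case analysis for a single vertex interval; everything else is a routine assembly of Theorem~\ref{thm:embedding}, Fact~\ref{fact:ids}, and standard facts about chains and products.
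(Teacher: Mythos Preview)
Your proposal is correct and follows essentially the same route as the paper: the forward direction via Theorem~\ref{thm:embedding}, and the converse by equipping $V(\cG)$ with the product-of-chains lattice, showing that removing vertex intervals leaves a sublattice, and then invoking the fact that an induced subgraph on a sublattice inherits compatibility. The only difference is that you verify the sublattice claim by a direct case analysis, whereas the paper cites Rival~\cite{Ri74} for it and then appeals to Fact~\ref{fact:sublat}; your version is more self-contained but otherwise identical in structure.
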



   A reflexive graph $G$ is $R$-thin if no two vertices have the same neighbourhood. 
   For questions of $\ret(G)$, one may always assume that $G$ is $R$-thin as there are
   simple linear time reductions between $\ret(G)$ and $\ret(G^R)$ where $G^R$, defined formally in
   Section \ref{sub:NRThin}, is the $R$-thin graph we get from $G$ by removing all but one
   vertex from every set of vertices sharing the same neighbourhood. 
   In Section \ref{sect:recog} we prove the following.

   \begin{theorem}\label{thm:poly}
     There is a polynomial time algorithm to decide whether
     or not an $R$-thin reflexive graph is a $\DL$-graph. 
   \end{theorem}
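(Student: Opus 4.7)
The plan is to exploit the two characterizations by reconstructing, in polynomial time, a candidate distributive lattice $L$ on $V(G)$---equivalently, a candidate pair $(P, A)$ as in Corollary~\ref{cor:char2}---and then verifying by checking $G \iso G(P, A)$.

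First, I would use $R$-thinness to establish a rigidity lemma: for an $R$-thin $\DL$-pair $(G, L)$, the lattice $L$ is canonically determined by $G$ up to the duality $L \leftrightarrow L^{\mathrm{op}}$. The idea is that in $G(P,A)$, since no two downsets have the same neighborhood, the adjacency data strictly refines the downset containment order. I would identify the (unordered) pair $\{\hat 0, \hat 1\}$ by a graph-theoretic extremality property, for instance by choosing extremal vertices with respect to a neighborhood preorder on $V(G)$, and then pin down the covering relation of the candidate lattice by inspecting which pairs of vertices correspond to downsets differing by a single join-irreducible.

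Second, after fixing a candidate $\hat 0$, I would inductively identify atoms, coatoms, and the full covering relation of the candidate lattice $L$. The join-irreducibles of $L$ are precisely the vertices of $L$ covering exactly one other vertex, which yields the candidate poset $P$ of size at most $|V(G)|$. The sub-digraph $A$ is then read off by examining, for each comparable pair $p \leq q$ in $P$, the adjacency in $G$ between the candidate downsets that witness this pair in their symmetric difference.

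Third, I would verify the candidate by explicitly constructing $G(P, A)$ from its definition in the definition of $G(P,A)$ and comparing with $G$ via the candidate bijection; since $|P| \leq |V(G)|$ and adjacency in $G(P, A)$ is checkable in polynomial time, this verification is efficient. The main obstacle is the rigidity step: without $R$-thinness, many different lattices can be compatible with $G$, and one could not commit to a single candidate to verify. The key technical lemma should show that $R$-thinness forces the lattice order to be determined, up to the duality $L \leftrightarrow L^{\mathrm{op}}$, by the neighborhood preorder on $V(G)$, so that the algorithm needs only to test the two candidates---one from each orientation---rather than search among exponentially many.
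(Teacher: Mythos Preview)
Your overall framework---reconstruct a candidate lattice from $G$ and then verify---is the same as the paper's, but the specific reconstruction you propose has a genuine gap.

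Your ``key technical lemma'' asserting rigidity up to duality is false. Take $G = P_2 \times P_2$, the categorical product of two reflexive $3$-paths. It is $R$-thin, and the product-of-chains lattice with $\zero=(0,0)$, $\unit=(2,2)$ is compatible. But the graph automorphism $(i,j)\mapsto(i,2-j)$ carries this to another compatible distributive lattice with $\zero=(0,2)$, $\unit=(2,0)$, and these two lattices are \emph{not} related by $L\leftrightarrow L^{\mathrm{op}}$. So there is no canonical pair $\{\zero,\unit\}$ detectable by a neighbourhood preorder, and your plan to test only two candidates does not work. The paper handles this simply by iterating over all $n^2$ ordered pairs $(\zero,\unit)$; what it actually proves (Corollary~\ref{cor:uniqueType2}) is uniqueness \emph{given} $\zero$ and $\unit$, which is weaker than what you claim.

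The second gap is that ``inspecting which pairs of vertices correspond to downsets differing by a single join-irreducible'' is the entire difficulty, and you have not said how to do it. The paper's mechanism is concrete: it removes all \emph{dispensable} edges (Definition~\ref{def:disp}, borrowed from the Cartesian-skeleton machinery in \cite{HIK}) to get a subgraph $S$, and proves in Lemma~\ref{lem:HHasse} that $S$ contains the Hasse diagram of $L$ and that every edge of $S$ lies between $L$-comparable vertices. This lemma is the technical core and its proof genuinely uses the induced embedding of Theorem~\ref{thm:embedding} into a product of proper interval graphs. Then Algorithm~\ref{alg:2} orients $S$ level-by-level from $\unit$, and Lemma~\ref{lem:orient} shows the orientation recovers $L$. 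Your proposal offers no substitute for either of these two lemmas.
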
    

   In fact, we give a polynomial time algorithm that not only decides if
   a given $R$-thin graph has a compatible distributive lattice, but if it does,
   finds one (actually all) such lattices.  

   While for questions about $\ret(G)$ one may assume that $G$ is $R$-thin, distributive lattice polymorphisms
   are unusual in the fact that the existence of a compatible distributive lattice for $G^R$ does not imply the existence of one for $G$. 
   We finish of Section \ref{sect:recog} with some notes about deciding if a non $R$-thin graph is a $\DL$-graph.

   
 



   \section{Definitions, Notation, and Basic Observations}\label{sec:def}  
    
    For any element $u$ of any ordering,
    $\pid{u}$ is the set of elements below $u$, and $\pfi{u}$ is the set of elements above it.  
    We write $a \prec b$ if $b$ {\em covers} $a$;
   that is, if $a < b$ and  there is no $x$ such that $a < x < b$. It is standard to 
   depict a poset by it's Hasse diagram-- its sub-digraph of covers-- and to 
   depict direction of the covers simply by assuming that the greater element is higher on the page.
   As we draw a lattice and a graph on the same set of vertices, the edges of our
   Hasse diagram are the thicker lighter edges, and the graph edges are thin
   and dark. 

    Recall that a {\em lattice} $L$ is a partial ordering on a set such that the 
    greatest lower bound and least upper bound are uniquely defined for any pair
    of elements.  These define the meet, $\wedge$,  and join, $\vee$, operations 
    respectively. It is a basic fact that the operations $\wedge$ and $\vee$ and the 
    lattice defined each other by the identities  
    \[ u \leq v \iff (u \wedge v) = u \mbox{ and } u \leq v \iff (u \vee v ) = v. \]  
    The lattice is {\em distributive} if the meet and join distribute. As our lattices are
    finite the meet and join operations are well defined for any set of elements and 
    there is a  maximum element, or {\em unit},  denoted $\unit$, and a minimum element
    or {\em zero} denoted $\zero$.

    A lattice $L$ on the vertices of a graph $G$ was defined to be compatible if its
    meet and join operations $\wedge$ and $\vee$ are polymorphisms. 
    Explicitly, $L$ is compatible with $G$ if and only if the following holds,
    where `$\sim$' denotes adjacency in $G$:
    \begin{equation}\label{id:poly}
    ( u \sim u' \mbox{ and }  v\sim v') \Rightarrow 
     (u \wedge v \sim u' \wedge v' \mbox{ and } u \vee v \sim u' \vee v'). 
    \end{equation}

     Along with \eqref{id:minmax}, there is another useful property of an ordering of vertices.   
     \begin{equation}\label{id:vee} 
   u \sim v \sim w  \mbox{ and } (u \leq v \geq w \mbox{  or } u \geq v \leq w)
   \Rightarrow u \sim w  
     \end{equation}

    \begin{fact}\label{fact:ids}
     For a compatible lattice ordering of a reflexive graph $G$ 
     identities \eqref{id:minmax} and \eqref{id:vee} hold for all vertices
     $u,v,w$ of $G$. 
   \end{fact}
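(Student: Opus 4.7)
The plan is to derive both identities directly from the compatibility condition \eqref{id:poly}, exploiting reflexivity (so that $x \sim x$ holds for every vertex $x$). Each identity should fall out after one or two applications of \eqref{id:poly}, with the given chain of comparabilities collapsing the relevant meets and joins.

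For \eqref{id:minmax}, starting from $u' \sim v'$, I would first feed \eqref{id:poly} with the pair $(u',v')$ and the reflexive pair $(u,u)$. The meet clause gives $u' \wedge u \sim v' \wedge u$; since $u' \leq u \leq v'$, this collapses to $u' \sim u$. The join clause gives $u' \vee u \sim v' \vee u$, which collapses to $u \sim v'$. A second application of \eqref{id:poly}, this time to the pairs $(u,v')$ and $(v,v)$, uses $u \leq v \leq v'$ to collapse $u \wedge v \sim v' \wedge v$ to $u \sim v$, which is the desired conclusion.

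For \eqref{id:vee}, the two cases are symmetric under interchanging $\wedge$ and $\vee$, so I would only treat the case $u \leq v \geq w$. Applying \eqref{id:poly} to the pairs $(u,v)$ and $(v,w)$, the meet clause gives $u \wedge v \sim v \wedge w$; under $u \leq v \geq w$ the two sides simplify to $u$ and $w$ respectively, yielding $u \sim w$. The case $u \geq v \leq w$ is the mirror argument with the join clause.

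There is no real obstacle here: both identities reduce to one or two invocations of \eqref{id:poly}. The only thing to watch is bookkeeping, making sure that each pair fed into \eqref{id:poly} is chosen so that the given chain of inequalities collapses the right two of the four expressions $u \wedge u'$, $v \wedge v'$, $u \vee u'$, $v \vee v'$.
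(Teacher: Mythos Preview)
Your proposal is correct and follows essentially the same approach as the paper: apply the compatibility identity \eqref{id:poly} together with reflexive loops to collapse the relevant meets and joins. Your treatment of \eqref{id:minmax} is in fact cleaner than the paper's printed proof, which handles only three of the four variables (it uses $u,v,w$ with $u \leq v \leq w$ rather than the full chain $u' \leq u \leq v \leq v'$); your two-step argument via $u \sim v'$ fills that gap explicitly.
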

     
    \begin{proof} For \eqref{id:minmax}, as $v \sim v$ we get 
     $u = u \wedge v \sim w \wedge v = v$ and $v = u \vee v \sim w \vee v = w$,
     as needed. 
     For \eqref{id:vee}, assuming that $u \leq v \geq w$, we get
     $u = (u \wedge v) \sim (v \wedge w) = w$. 
    \end{proof}





  The product $L_1 \times L_2$ of two lattices is the ordering on the set 
  $L_1 \times L_2$ defined by 
     \[ (a_1,a_2) \leq (b_1, b_2) \mbox{ if } a_i \leq_i b_i \mbox{ for } i = 1,2, \]
  and the operations $\vee$ and $\wedge$ of the product are defined
  componentwise from the corresponding operations of the factors. 
  Thus the product of distributive lattices is a distributive lattice.
  The {\em (categorical) product} of two graphs $G_1$ and $G_2$,
  is the graph $G = G_1 \times G_2$ with 
  vertex set $V(G_1) \times V(G_2)$ and edgeset
    \[ \{ (u_1,u_2)(v_1,v_2) \mid u_iv_i \in G_i \mbox{ for } i = 1,2\}. \] 

  The following is standard. 
  
    \begin{lemma}\label{lem:product} If the reflexive graph $G_i$ is compatible with 
     the lattice $L_i$ for $i = 1,2$ then $G_1 \times G_2$ is compatible with 
     $L_1 \times L_2$. 
    \end{lemma}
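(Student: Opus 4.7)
The plan is to verify the compatibility identity \eqref{id:poly} directly, exploiting the fact that both the lattice operations on $L_1 \times L_2$ and the adjacency relation of $G_1 \times G_2$ are defined coordinatewise.

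First I would unpack the hypothesis. Suppose we have edges $(u_1,u_2) \sim (u_1',u_2')$ and $(v_1,v_2) \sim (v_1',v_2')$ in $G_1 \times G_2$. By the definition of the categorical product of graphs, this is equivalent to having $u_i \sim u_i'$ and $v_i \sim v_i'$ in $G_i$ for each $i \in \{1,2\}$.

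Next I would apply the compatibility hypothesis for each factor separately. Since $L_i$ is compatible with $G_i$, identity \eqref{id:poly} gives $u_i \wedge_i v_i \sim u_i' \wedge_i v_i'$ and $u_i \vee_i v_i \sim u_i' \vee_i v_i'$ in $G_i$, for $i = 1, 2$. Reassembling coordinates and using that the operations in $L_1 \times L_2$ act componentwise yields
\[ (u_1,u_2) \wedge (v_1,v_2) \sim (u_1',u_2') \wedge (v_1',v_2') \]
and likewise for $\vee$ in $G_1 \times G_2$, which is exactly \eqref{id:poly} for the product. There is no real obstacle here; the statement is essentially a bookkeeping consequence of the fact that both $\times$ on graphs and $\times$ on lattices are defined coordinatewise, so the compatibility identity splits into its two coordinate projections and each one is handed to us by hypothesis.
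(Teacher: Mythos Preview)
Your proof is correct and follows essentially the same approach as the paper: unpack adjacency in the product into coordinatewise adjacency, apply the compatibility identity \eqref{id:poly} in each factor, and reassemble using that the lattice operations on $L_1 \times L_2$ act componentwise. The only difference is cosmetic labeling of the four vertices involved.
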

     \begin{proof}
     Let $(u_1,u_2)\sim (v_1,v_2)$ and $(u'_1,u'_2)\sim (v'_1,v'_2)$ in
     $G_1 \times G_2$. 
     Then 
      \[ (u_1,u_2) \wedge (u'_1,u'_2) = (u_1 \wedge u'_1, u_2 \wedge u'_2) \sim
           (v_1 \wedge v'_1, v_2 \wedge v'_2) =  (v_1,v_2) \wedge (v'_1,v'_2),\]
     and similarly $(u_1,u_2) \vee (u'_1,u'_2) =  (v_1,v_2) \vee (v'_1,v'_2)$. 
   \end{proof}


    A sublattice $L'$ of a lattice $L$ is any subset that is closed under the meet
    and join operations. The following is clear from the definition of compatibility. 
    
   \begin{fact}\label{fact:sublat}
    If a graph $G$ is compatible with a lattice $L$, and $L'$ is a sublattice of $L$,
    then the subgraph $G'$ of $G$ induced by $L'$ is compatible with $L'$.
   \end{fact}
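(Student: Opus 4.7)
The plan is to unwind the two definitions in play---\emph{sublattice} and \emph{induced subgraph}---and verify identity \eqref{id:poly} mechanically. First I would note that, by the very definition of a sublattice, the meet and join operations of $L'$ are the restrictions to $L'$ of the corresponding operations of $L$; equivalently, $L'$ is closed under both $\wedge$ and $\vee$ taken in $L$. Second, since $G'$ is the induced subgraph on $L'$, the adjacency relation of $G'$ is just the adjacency relation of $G$ restricted to $L' \times L'$.

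Then I would take arbitrary pairs $u \sim u'$ and $v \sim v'$ in $G'$ with all four endpoints in $L'$ and invoke the compatibility hypothesis \eqref{id:poly} for $(G,L)$, which yields $u \wedge v \sim u' \wedge v'$ and $u \vee v \sim u' \vee v'$ in $G$. Sublattice closure puts all four meets and joins back inside $L'$, so these edges in fact lie in $G'$; this is precisely the content of \eqref{id:poly} for the pair $(G',L')$.

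There is no real obstacle here---the statement is essentially a definitional consequence---and so I would keep the proof to a couple of lines. The only thing worth flagging is that both closure properties of a sublattice genuinely enter the argument: closure under $\wedge$ to keep the meet conclusion inside $G'$, and closure under $\vee$ for the join, so that the verification of \eqref{id:poly} inside $(G',L')$ reduces cleanly to the same verification inside $(G,L)$.
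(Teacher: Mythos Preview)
Your proposal is correct and matches the paper's own treatment: the paper simply declares the fact ``clear from the definition of compatibility'' and gives no further argument, so your explicit unwinding of the definitions of sublattice and induced subgraph is exactly the intended (and only) route.
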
 

     A {\em conservative set} (or subalgebra) in a reflexive graph $G$ is an subset
     $S \subset V(G)$ that is the intersection of sets of the form
     $\{ x \in V(G) \mid d(x,x_0) \leq d \}$ for some
     vertex $x_0$ and integer $d$.  Components and maximal cliques are examples
     of conservative sets.  It is a basic fact, (see \cite{BJK}), that a conservative
     set of a graph is closed under any polymorphism. We use this to prove the following,
     which allows us to restrict our attention to connected graphs. 

   \begin{lemma}\label{lem:connected} 
     A graph is a (distributive) lattice graph if and only if each component is. 
   \end{lemma}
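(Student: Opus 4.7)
The plan is to prove both directions separately, using conservative sets for the forward direction and an explicit ordinal-sum construction for the converse.

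For the forward direction, suppose $(G,L)$ is a (distributive) lattice pair and let $C$ be any component of $G$. Since $G$ is finite, $C$ equals the ball $\{x \in V(G) \mid d(x,x_0) \leq \mathrm{diam}(C)\}$ about any $x_0 \in C$, so $C$ is a conservative set. By the fact cited from \cite{BJK}, $C$ is closed under every polymorphism of $G$, and in particular under $\wedge$ and $\vee$. Thus $C$ is a sublattice of $L$, and Fact \ref{fact:sublat} gives that the subgraph induced by $C$ is compatible with $L|_C$. Distributivity is preserved on sublattices since it is a universal identity, so each component inherits a compatible distributive lattice when $L$ is distributive.

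For the converse, let the components of $G$ be $C_1,\dots,C_k$ with compatible (distributive) lattices $L_1,\dots,L_k$, and form the ordinal sum $L = L_1 \oplus \cdots \oplus L_k$ on $V(G)$ by ranking every element of $L_i$ strictly below every element of $L_j$ whenever $i<j$ and preserving the order inside each $L_i$. The operations of $L$ either apply the operations of some $L_i$ (when both arguments lie in $C_i$) or collapse to the argument in the lower/upper component. Since every edge of $G$ lies inside a single component, checking \eqref{id:poly} splits into three cases for an arbitrary pair of edges $uu'$ in $C_i$ and $vv'$ in $C_j$: when $i=j$ compatibility follows from that of $L_i$; when $i \ne j$, both $u\wedge v$ and $u'\wedge v'$ reduce to the endpoints of one of the two given edges (namely those in the lower component), and similarly for $\vee$, so the required adjacency is immediate. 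Distributivity of the ordinal sum reduces, by case analysis on which summands contain $a,b,c$, either to distributivity of some $L_i$ or to trivial identities such as $a \wedge (b \vee c) = a$ when $a$ sits in a summand strictly below those of $b$ and $c$.

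The main obstacle is really only the bookkeeping in the converse direction, and in particular checking distributivity of the ordinal sum; but no case is genuinely difficult once one observes that the structure across distinct summands behaves like a chain, and chains are distributive. Neither direction requires any subtle graph-theoretic input beyond the two cited facts about conservative sets and sublattices.
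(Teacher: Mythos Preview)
Your proof is correct and follows essentially the same approach as the paper's own proof: the paper's ``simple join'' is exactly your ordinal sum, and its forward direction likewise uses that components are conservative sets (subalgebras) closed under the lattice operations, then invokes Fact~\ref{fact:sublat}. You have simply supplied the case analysis that the paper suppresses with ``it is easy to check.''
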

   \begin{proof} 
    If a graph is disconnected, and each of its components has a compatible 
    lattice $L_i$, 
    then let $L$ be the {\em simple join} of the component lattices;  
    that is, let $L$ be the lattice on the set $\bigcup_{i = 1}^d L_i$ with the 
    ordering defined by $x \leq y$ if $x \leq y$ in some $L_i$ or if $x \in L_i$
     and $y \in L_j$ for 
     $i < j$. It is easy to check that this lattice is compatible with $G$, 
    and that it is distributive
    if the component lattices are. 
    
   On the other hand, if a disconnected graph has a compatible lattice, 
   then as each component is a subalgebra, and subalgebras are closed under
   polymorphisms,
   each component is closed under the lattice operations. 
   Thus each component induces a
   sublattice, so is compatible with the component by Fact \ref{fact:sublat}. 
   If a lattice is distributive, then so is any sublattice.   
   \end{proof}





   The following, which does not hold for semilattices, is a huge simplifaction. 

  \begin{proposition}\label{prop:HasseSubgraph}
    For a connected reflexive graph $G$ with a compatible lattice $L$,
    the Hasse diagram of $L$ is a subgraph of $G$. 
  \end{proposition}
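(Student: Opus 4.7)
The plan is to fix a cover $a \prec b$ in $L$ and show directly that $a \sim b$ in $G$, using connectedness together with the fact that both $\wedge$ and $\vee$ are polymorphisms.

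First, since $G$ is connected, pick any walk $a = v_0 \sim v_1 \sim \dots \sim v_k = b$ in $G$. The idea is to ``squeeze'' this walk into the interval $[a,b]$ of $L$ by applying the lattice polymorphisms coordinate-wise against the constant sequences $b,b,\dots,b$ and $a,a,\dots,a$. Concretely, set
\[ w_i := (v_i \wedge b) \vee a \quad \text{for } i = 0, 1, \dots, k. \]
Because $a \sim a$ and $b \sim b$ are loops and $v_i \sim v_{i+1}$, applying \eqref{id:poly} twice (once for $\wedge$, once for $\vee$) yields $w_i \sim w_{i+1}$ in $G$ for each $i$. So $w_0, w_1, \dots, w_k$ is a walk in $G$.

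Next I would check the endpoints and the range of the $w_i$'s. Since $a \leq b$, we have $w_0 = (a \wedge b) \vee a = a$ and $w_k = (b \wedge b) \vee a = b$. For every $i$, the element $v_i \wedge b$ lies below $b$, and joining with $a$ pushes it up to be at least $a$; thus $a \leq w_i \leq b$ for all $i$.

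The key observation is now that $b$ covers $a$, so the interval $[a,b]$ in $L$ consists only of $a$ and $b$. Therefore each $w_i \in \{a,b\}$. Since $w_0 = a$ and $w_k = b$, there must be some index $i$ with $w_i = a$ and $w_{i+1} = b$, and the already-established adjacency $w_i \sim w_{i+1}$ gives $a \sim b$ in $G$, as desired. No step here is subtle; the only moment that requires any care is verifying that the double application of \eqref{id:poly} really does preserve adjacency along the walk, which is immediate from the fact that $\wedge$ and $\vee$ are binary polymorphisms and loops are present at every vertex.
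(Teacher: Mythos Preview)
Your proof is correct and follows essentially the same idea as the paper's: squeeze a walk from $a$ to $b$ into the interval $[a,b]$ using the lattice polymorphisms against the loops at $a$ and $b$, then use that $[a,b]=\{a,b\}$. The only cosmetic difference is that the paper does this in two stages (first join with $a$ to land in $\pfi{a}$, then meet with $b$ to land in $\pid{b}\cap\pfi{a}$), whereas you combine both into the single map $x\mapsto (x\wedge b)\vee a$.
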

  \begin{proof}
   It is enough to show for any cover $v \lcov u$, that $uv$ is an edge of $G$. 

   Observe first that the upset $\pfi{v}$  is a connected subgraph of $G$.
   Indeed as $G$ is connected, for $u_0$ and $u_p$ in $\pfi{v}$,
   there is   a path  $u_0 \sim u_1 \sim \dots \sim u_p$ between them in $G$. 
   So $(v \vee u_0) \sim (v \vee u_1) \sim \dots \sim (v \vee u_p)$ 
   is a walk between them in $\pfi{v}$.
   
   The same proof in connected $\pfi{v}$ then shows that the downset
   $\pid{u}$ in $\pfi{v}$ is
   connected.  But it  contains only $u$ and $v$, so $uv$ is an edge.  
  \end{proof}

  \section{Some Examples}\label{sect:examples}

   As all but the minimum and maximum vertex of a lattice must have at least one
   cover and be covered by one other vertex, the following is immediate from 
   Proposition \ref{prop:HasseSubgraph}.

  \begin{example}\label{ex:degOne}
    For a connected reflexive graph $G$ with a degree one vertex $v$,
    $v$ must be the minimum or maximum vertex of any compatible lattice $L$. 
    In particular, the only reflexive trees with compatible lattices are 
    paths.  
  \end{example}

  \begin{figure} \setlength{\unitlength}{340.1574707bp}
    \begin{picture}(1,0.43333335)(-.2,0)%
    \put(0,0){\includegraphics[width=\unitlength]{./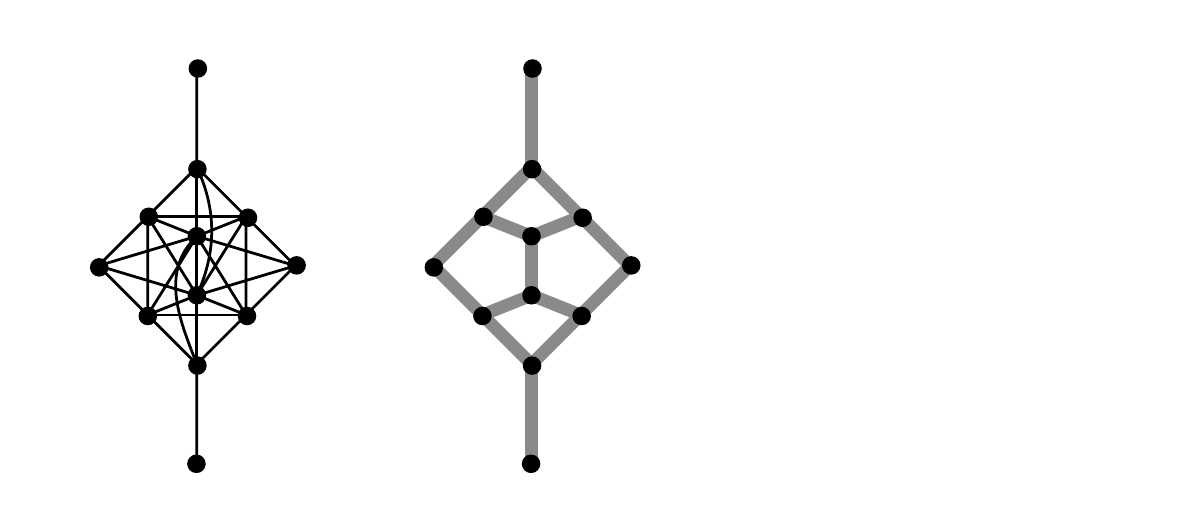}}%
    \put(0.46666669,0.02500007){\color[rgb]{0,0,0}\makebox(0,0)[lb]{\smash{$0$}}}%
    \put(0.46666669,0.36666671){\color[rgb]{0,0,0}\makebox(0,0)[lb]{\smash{$1$}}}%
    \put(0.45833336,0.10833334){\color[rgb]{0,0,0}\makebox(0,0)[lb]{\smash{$a$}}}%
    \put(0.38553573,0.14279105){\color[rgb]{0,0,0}\makebox(0,0)[lb]{\smash{$b$}}}%
    \put(0.49388894,0.14195102){\color[rgb]{0,0,0}\makebox(0,0)[lb]{\smash{$c$}}}%
    \put(0.45693121,0.18394846){\color[rgb]{0,0,0}\makebox(0,0)[lb]{\smash{$d$}}}%
    \put(0.45273149,0.23854504){\color[rgb]{0,0,0}\makebox(0,0)[lb]{\smash{$e$}}}%
    \put(0.34437829,0.19822752){\color[rgb]{0,0,0}\makebox(0,0)[lb]{\smash{$f$}}}%
    \put(0.54596561,0.20326726){\color[rgb]{0,0,0}\makebox(0,0)[lb]{\smash{$g$}}}%
    \put(0.3905754,0.26206356){\color[rgb]{0,0,0}\makebox(0,0)[lb]{\smash{$h$}}}%
    \put(0.50396828,0.25366398){\color[rgb]{0,0,0}\makebox(0,0)[lb]{\smash{$i$}}}%
    \put(0.4636508,0.29230159){\color[rgb]{0,0,0}\makebox(0,0)[lb]{\smash{$j$}}}%
  \end{picture}%
  \caption{Graph (left) with compatible lattice (right) but no compatible distributive lattice}
  \label{fig:LnotDL}
  \end{figure}

  \begin{proposition}\label{prop:badretracts}
    Neither the class of graphs admitting compatible lattices, nor the class 
    admitting compatible distributive lattices, are closed under retraction.  
  \end{proposition}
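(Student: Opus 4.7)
The plan is to produce a single example that simultaneously refutes retraction-closure for both classes, by exhibiting a $\DL$-graph that retracts onto a graph with no compatible lattice at all.

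For the retract $H$ I would take the reflexive claw $K_{1,3}$. It is a reflexive tree that is not a path, and it has three degree-one vertices, which cannot all be simultaneously the zero or unit of a lattice, so by Example \ref{ex:degOne} it admits no compatible lattice and in particular no compatible distributive lattice. For the ambient graph I would take $G = P \times P$, where $P$ is the reflexive path on $\{0,1,2\}$; this is the $3\times 3$ king's graph with loops. The chain lattice on $\{0,1,2\}$ is distributive and compatible with $P$ (the min-max identity on a chain is immediate from $|x-y|\le 1$), so by Lemma \ref{lem:product} the product of chain lattices is a compatible distributive lattice for $G$, and hence $G$ is a $\DL$-graph.

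The plan is then to realise $H$ as the induced subgraph of $G$ on the four vertices $(1,1),(0,0),(2,0),(2,2)$. A quick grid-distance check confirms that the center $(1,1)$ is adjacent to each of the three corners while the corners are pairwise non-adjacent, so this induced subgraph is indeed the claw. I would then define $\phi \colon V(G) \to V(H)$ to be the identity on these four vertices and to send every other vertex of $G$ to the center $(1,1)$. Since $(1,1)$ is adjacent in $H$ to every vertex of $H$ (the loop included), any edge of $G$ with at least one endpoint mapped to the center automatically lands in $E(H)$; the only remaining edges to check are those inside $H$ itself, which are preserved by definition. Thus $\phi$ is a retraction, so $H$ is a retract of the $\DL$-graph $G$, witnessing both non-closure statements at once.

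The only real work is the homomorphism check for $\phi$, which reduces to a finite case analysis that is routine because every non-fixed vertex is sent to the universal vertex of the claw. The subtle point — and the one I would state carefully — is that this single example gives the stronger combined conclusion: the retract $H$ fails to admit even a compatible lattice, yet the supergraph $G$ already admits a compatible distributive lattice, so both classes are simultaneously shown not to be closed under retraction.
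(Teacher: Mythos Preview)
Your proof is correct and follows essentially the same approach as the paper: exhibit a reflexive star as a retract of the product $P\times P$ of two reflexive paths of length~$2$, and then invoke Example~\ref{ex:degOne} to see that the star, having more than two degree-one vertices, admits no compatible lattice. The only cosmetic difference is that the paper uses $K_{1,4}$ (the center together with all four corners of the $3\times 3$ king's graph) while you use $K_{1,3}$; both work for the same reason.
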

  \begin{proof}
    It is easy to see that the reflexive biclique $K_{1,4}$ is a retract of the 
    product $P_2^2$ of two reflexive paths. $P_2^2$ has a distributive lattice
    by Lemma \ref{lem:product}, but $K_{1,4}$ does not, by Example \ref{ex:degOne}. 
  \end{proof}

  \begin{proposition}\label{ex:bigone}
    There are graphs that have compatible lattices but have no compatible distributive 
    lattices. 
  \end{proposition}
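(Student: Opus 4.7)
The plan is to prove Proposition \ref{ex:bigone} by exhibiting the specific graph $G$ and lattice $L$ drawn in Figure \ref{fig:LnotDL}. The proof has two parts: (1) verify that the pictured $L$ is a compatible lattice, and (2) show that $G$ admits no compatible distributive lattice at all. Part (1) is a finite case-check: one confirms that meet and join are well defined on the pictured ordering, and then verifies the compatibility identity \eqref{id:poly} on the edges of $G$. Because the lattice has only twelve elements, this reduces to a bounded computation, and I would organise it by listing, for each edge $u \sim v$ of $G$ and each other vertex $w$, the image edges $u \wedge w \sim v \wedge w$ and $u \vee w \sim v \vee w$.

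Part (2) is where the substance lies. The key tool is Proposition \ref{prop:HasseSubgraph}: the Hasse diagram of any compatible lattice $L'$ on $V(G)$ must be a subgraph of $G$. This rigidly constrains the possibilities, since the cover graph of $L'$ is forced to live inside the edge set of $G$. I would begin by combining this with Example \ref{ex:degOne}, which pins down which vertices of $G$ can serve as $\zero$ and $\unit$ of $L'$ (the degree-one vertices must be extremal), and with the Jordan--Dedekind property of distributive lattices (every maximal chain between the same pair of elements has the same length). From these constraints one lists the candidate up-set structures of $L'$ level by level, starting from $\zero$.

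For each candidate, the strategy is to find a copy of $M_3$ or $N_5$ as a sublattice of $L'$, which would contradict distributivity. Concretely, given that the Hasse diagram of $L'$ sits inside $G$, one identifies three atoms (or three coatoms, or a short chain with a parallel two-step chain) whose forced meets and joins produce exactly the forbidden configuration. As an alternative to this direct attack, one could apply Corollary \ref{cor:char2}: if $G$ were a $\DL$-graph, then $G \cong G(P,A)$ for some poset $P$ and subdigraph $A$; checking local properties of $G(P,A)$ (for instance, the structure of its induced $C_4$'s and the distributive identities that edges in $G(P,A)$ must satisfy) against the combinatorics of $G$ rules this out.

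The main obstacle is the case analysis in part (2): there may be several candidate compatible lattices whose Hasse diagrams embed in $G$, and one must eliminate each one. The virtue of the figure is that the graph $G$ has been chosen with enough rigidity—via its degree sequence and short-cycle structure—to pin down $L'$ up to obvious symmetries, after which a forbidden $M_3$ or $N_5$ appears automatically from the forced meets and joins of a small number of atoms. Once that obstruction is exhibited, the proposition follows.
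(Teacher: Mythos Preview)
Your overall structure matches the paper's: verify the pictured lattice is compatible by finite check, then argue that no compatible distributive lattice exists. The opening move of part~(2) is also the same---use Proposition~\ref{prop:HasseSubgraph} to pin down $\zero$ and $\unit$ (here the vertices labelled $0$ and $1$), and hence the unique cover $a$ of $\zero$ and the unique element $j$ covered by $\unit$.

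Where your plan diverges is in the mechanism for forcing a forbidden sublattice. You propose a level-by-level enumeration of candidate Hasse diagrams inside $G$, pruned by the Jordan--Dedekind chain condition, followed by a search for $M_3$ or $N_5$ in each candidate. That can be made to work, but it is a broad case analysis, and the paper does something sharper that you have not mentioned: it exploits \emph{conservative sets}. Any intersection of metric balls---in particular any maximal clique, or any intersection of maximal cliques---is closed under every polymorphism, hence under $\wedge$ and $\vee$, hence induces a sublattice. Applying this to the two-element set $\{d,e\}$ forces it to be a chain; applying it to $\{d,e,h\}$ together with identity~\eqref{id:vee} forces $h\geq e$; symmetrically $b\leq d$; and then the maximal clique $\{b,d,e,f,h\}$ must itself be a five-element sublattice in which $f$ is incomparable to $d$ and $e$ (again by~\eqref{id:vee}). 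That sublattice is non-distributive, finishing the argument with essentially no branching.

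So your proposal is not wrong, but it is missing the idea that does the real work. The conservative-set observation, combined with identities~\eqref{id:minmax} and~\eqref{id:vee}, lets one read the forbidden sublattice directly off the clique structure of $G$ rather than enumerating candidate lattices. Your alternative suggestion of invoking Corollary~\ref{cor:char2} is also underspecified: you would need to say precisely which local obstruction in $G$ cannot arise in any $G(P,A)$, and as stated the plan is just a restatement of the goal.
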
 
  \begin{proof}
    Let $G$ be the graph on the left of Figure \ref{fig:LnotDL}.  It is 
    easy but tedious to verify, using \eqref{id:poly} that the non-distributive
    lattice shown on the right is compatible.  We show that there is no distributive
    lattice that is compatible with $G$. 

    Assume, towards contradiction, that $G$ has a compatible distributive lattice.
    By Proposition \ref{prop:HasseSubgraph},  $\zero$ and $\unit$ must be the 
    vertices labelled $0$ and $1$ in the figure. Further $\zero$ must have
    unique cover $a$ and  $\unit$ must cover $j$. So $\pid{j} \cap \pfi{a}$
    is a distributive 
    sublattice with zero $a$ and unit $j$.  

    As  the set $\{d,e\}$ is the intersection of maximal cliques, it is a conservative 
    set, so induces a sublattice. The only $2$ element lattices is the chain, so we may
    assume, without loss of generality, that $d \leq e$.  

    The set $\{d,e,h\}$ is also an intersection of maximal cliques, so induces a sublattice
    of three elements, so must also be a chain.  If $h \leq e$ then 
    by \eqref{id:vee} $a$ and $h$ are adjacent, so $h \geq e$. 
     Similarly $b \leq d$.

    The set $\{b,d,e,f,h\}$ is a maximal clique, so induces a lattice. As $b$ is not adjacent
    to $a$ or $j$, it follows from \eqref{id:vee} that it can neither be above 
    or below $d$ or $e$, so it is incomparable with them.  Thus the sublattice
    induced on $\{b,d,e,f,h\}$ is as shown in the figure.
    It is well known that no lattice with 
    this lattice as a sublattice is distributive. 
  \end{proof}


   \section{Downset Construction}\label{sec:downsets}  

  Our main result of this section is Theorem \ref{thm:char2}.  
  Before we prove it, we make some easy observations about
  the construction $G(P,A)$ of Definition \ref{def:GPA}. Recall that $A$ is a sub-digraph of
  a poset $P$; it will always have the same vertex set as $P$
  We write
  $a \to b$ to mean that $(a,b)$ is an arc of $A$.

  
    An arc $(x', y')$ of $P$ is {\em useless} in a sub-digraph $A$
    if there is $(x,y) \not\in A$ with $x' \leq x \leq y \leq y'$ (and
    $x' \neq x$ or $y' \neq y$). Removing all useless arcs from $A$ it clearly 
    satisfies the following directed version of \eqref{id:minmax}
   \begin{equation}\label{id:minmaxd}
     (u' \leq u \leq v \leq v' \mbox{ and } u' \to v')  \Rightarrow u \to v
   \end{equation}

    \begin{lemma}\label{lem:extension1}
      The graph $G(P,A)$ is unchanged by adding or removing useless arcs from $A$.
      Thus $A$ may be assumed to satisfy \eqref{id:minmaxd}.
    \end{lemma}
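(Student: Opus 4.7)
My plan is to establish the lemma in two stages: a local stage showing that toggling a single useless arc in $A$ does not change $G(P,A)$, and a global stage iterating this to force \eqref{id:minmaxd}. For the local stage, I would fix a useless arc $(x',y')$ with witness $(x,y)\notin A$, so $x' \le x \le y \le y'$ and $(x,y) \neq (x',y')$. The essential combinatorial observation to isolate first is this: \emph{if both $x',y'$ lie in $D - D'$ (respectively in $D' - D$) for downsets $D, D'$, then both $x,y$ lie in the same side of the symmetric difference.} In the case $\{x',y'\} \subseteq D - D'$, the chain $x \le y \le y' \in D$ puts $x, y \in D$ via the downset property, and if either of $x,y$ were in $D'$ then $x' \le x$ (or $x' \le y$) and the downset property would force the contradiction $x' \in D'$; the opposite case is symmetric.

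With this observation in hand, I would compare the adjacency criteria for $G(P,A)$ and $G(P, A \cup \{(x',y')\})$. Since only $(x',y')$ has changed its membership in $A$, the two criteria can differ on a pair $D, D'$ only if $(x',y')$ is itself an arc whose endpoints lie on one side of $D \triangle D'$. But then by the observation the witness $(x,y)$ is also such an arc, and since $(x,y) \notin A$ and $(x,y) \neq (x',y')$, we also have $(x,y) \notin A \cup \{(x',y')\}$. So the adjacency condition already fails on account of $(x,y)$ in both versions, giving $G(P, A) = G(P, A \cup \{(x',y')\})$. The removal direction follows symmetrically, because any witness outside $A$ remains outside any sub-digraph obtained by further deletion.

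For the global stage, I will iterate: while $A$ has a useless arc, delete it. The process terminates because $A$ is finite and strictly shrinks at every step, and at termination no arc is useless, which is exactly \eqref{id:minmaxd}. The main (mild) obstacle I anticipate is ensuring the ``useless'' status is not accidentally destroyed by the iteration -- but since deletion only enlarges the complement of $A$, every previous witness remains a witness, so no previously useless arc is rehabilitated and the iteration proceeds unimpeded. The only intellectually non-routine step is the downset observation in the first paragraph; after that, the rest is bookkeeping.
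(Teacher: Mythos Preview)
Your proof is correct and follows essentially the same approach as the paper: the paper's proof is a terse version of your local stage, observing that whenever $x',y'$ lie in $D\setminus D'$ so do $x,y$, hence the witness $(x,y)\notin A$ already blocks adjacency regardless of the status of $(x',y')$. Your treatment is more careful in spelling out the downset argument and in handling the global iteration to reach \eqref{id:minmaxd}, but the underlying idea is identical.
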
 
    \begin{proof}
        Let $x' \leq x \leq y \leq y'$, and $x \not\to y$ in $A$.
        For any two downsets $D$ and $D'$ of $P$ with $x',y'$ in $D \setminus D'$, we 
        clearly have that $x,y$ are in $D \setminus D'$ as well, and so $D \not\sim D'$ in 
        $G(P,A)$ whether $x' \to y'$ or not.  
    \end{proof}

   For a sub-digraph $A$ of a poset $P$, let $\comp{A}$ be the sub-digraph of $P$ with
   arc set
      \[ \{ (x, y)  \mid (x,y) \in P \setminus A \}.\]  
   The following is a useful alternate definition of adjacency in $G(P,A)$.

  \begin{lemma}\label{lem:altGPA}
    Where $A$ is a sub-digraph of a poset $P$, and $D$ and $D'$ are in $\cD(P)$, 
    $D$ and $D'$ are adjacent in $G(P,A)$ if and only if the following hold for all
    vertices $x$ and $y$.
     \begin{itemize}
       \item  If $x \in D$ and $(y,x) \in \comp{A}$ then $y \in D'$, and
       \item  If $x \in D'$ and $(y,x) \in \comp{A}$ then $y \in D$.
       \end{itemize}  
   \end{lemma}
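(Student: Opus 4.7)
The plan is to verify directly that the two conditions in the lemma are just the contrapositive reformulation of the adjacency rule from Definition~\ref{def:GPA}, using only the fact that $D$ and $D'$ are downsets of $P$. After reducing by Lemma~\ref{lem:extension1} if desired, no structural property of $A$ beyond being a sub-digraph of $P$ is needed.

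First I would recast the definition: $D \sim D'$ in $G(P,A)$ fails precisely when there is some arc $(y,x) \in \comp{A}$ whose endpoints lie together in $D \setminus D'$ or together in $D' \setminus D$. So the lemma amounts to showing that the two bulleted conditions hold iff no such violating arc exists.

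For the forward direction, assume $D \sim D'$ and take $x \in D$ with $(y,x) \in \comp{A}$. Since $(y,x)$ is an arc of $P$ we have $y \leq x$, and because $D$ is a downset, $y \in D$. If it were the case that $y \notin D'$, then $x$ would also lie outside $D'$ (otherwise the first bullet is vacuous to check — but then $y \in D'$ holds by the downset property applied to $D'$, contradiction), so both $x$ and $y$ would be in $D \setminus D'$, contradicting adjacency. Hence $y \in D'$, giving the first bullet; the second bullet follows by interchanging the roles of $D$ and $D'$.

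For the converse, assume the bullets hold and take any arc $(y,x) \in \comp{A}$; I must check neither $\{x,y\} \subseteq D \setminus D'$ nor $\{x,y\} \subseteq D' \setminus D$. If $x \in D$, the first bullet gives $y \in D'$, and since $y \leq x$ and $D$ is a downset, also $y \in D$, so $y \in D \cap D'$, which rules out both possibilities. The case $x \in D'$ is symmetric, and if $x \notin D \cup D'$ then $x$ is in neither set difference and there is nothing to check. No step here is more than bookkeeping; the only thing to be careful about is keeping track of which downset property is used at each inference, so that the argument uses only $y \leq x$ and the definition of a downset.
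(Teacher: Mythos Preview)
Your argument is correct and is essentially the same as the paper's: both amount to observing that ``$D\setminus D'$ contains no arc of $\comp{A}$'' is equivalent, via the downset property (if $x\in D'$ then $y\leq x$ forces $y\in D'$), to the first bulleted condition, and symmetrically for the second. The only difference is that the paper does this as a single equivalence while you split it into two implications; your parenthetical in the forward direction (``otherwise the first bullet is vacuous to check'') is oddly phrased, but the intended contrapositive --- if $x\in D'$ then $y\in D'$ by the downset property --- is clear and correct.
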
  
  \begin{proof}
    The definition of adjacency of $D$ and $D'$ is clearly equivalent to the 
    statement that  neither of $D \setminus D'$ or $D' \setminus D$ induce an
    edge of $\comp{A}$. That $D \setminus D'$ induces no edge in $\comp{A}$ is equivalent
    to the statement that for all $(y, x)$ in $\comp{A}$ with $x,y$ in $D$, 
    either $y \not\in D'$ or $x \not\in D'$. As $D$ and $D'$ are downsets, this reduces
    to the statement that for all $(y, x)$ in $\comp{A}$ with $x \in D$, $y \not\in D'$.
  \end{proof}

 


   \begin{theorem}\label{thm:char2} 
    For any reflexive graph $G$ compatible with a distributive lattice $L$,
    $G \iso G(J_L, A)$ for a unique sub-digraph $A$ of $J_L$    
    satisfying \eqref{id:minmax}.   
   \end{theorem}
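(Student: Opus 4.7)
By Birkhoff's theorem, $L$ is isomorphic to the downset lattice $\cD(J_L)$ with $\wedge = \cap$ and $\vee = \cup$, and I would work under this identification. The natural candidate is to define $A$ to be the sub-digraph of $J_L$ whose arcs are those $(x,y)$, with $x \leq y$, for which $\pid y \sim \pid y - \pfi x$ holds in $G$ (here $\pid{\cdot}$ and $\pfi{\cdot}$ are taken in $J_L$). It remains to verify three things: (i) $A$ satisfies \eqref{id:minmaxd}, (ii) $G = G(J_L, A)$, and (iii) this $A$ is unique.

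Both (i) and the forward inclusion $G \subseteq G(J_L, A)$ in (ii) follow from two-step applications of \eqref{id:poly} against self-loops. For (i), given $u' \leq u \leq v \leq v'$ with $(u',v') \in A$, I would first meet the edge $\pid{v'} \sim \pid{v'} - \pfi{u'}$ with the loop at $\pid v$, using $\pid v \subseteq \pid{v'}$, to get $\pid v \sim \pid v - \pfi{u'}$; then join this with the loop at $\pid v - \pfi u$, using $\pfi u \subseteq \pfi{u'}$, to get $\pid v \sim \pid v - \pfi u$, so $(u,v) \in A$. The forward inclusion uses the same idea: given $D \sim D'$ with $\{x,y\} \subseteq D - D'$ and $x \leq y$, meeting $D \sim D'$ with the loop at $\pid y$ yields $\pid y \sim D' \cap \pid y$, and joining with the loop at $\pid y - \pfi x$ yields $\pid y \sim \pid y - \pfi x$, using $\pid y \subseteq D$ and $D' \cap \pid y \subseteq \pid y - \pfi x$ (the latter because $x \notin D'$ and $D'$ is a downset).

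The main obstacle is the reverse inclusion $G(J_L, A) \subseteq G$. Using Fact \ref{fact:ids} and \eqref{id:vee} at the valley $D \cap D'$, I would reduce this to showing: for downsets $Y \subseteq X$ such that every arc of $J_L$ internal to $X - Y$ lies in $A$, one has $X \sim Y$. I would induct on $|X - Y|$, handling the cover case by Proposition \ref{prop:HasseSubgraph} and splitting the inductive step on the structure of the induced sub-poset $X - Y$. If $X - Y$ has two distinct maximal elements $w_1,w_2$, then both $X - \{w_i\}$ are downsets containing $Y$, induction gives $X - \{w_i\} \sim Y$, and \eqref{id:poly} via $(X - \{w_1\}) \cup (X - \{w_2\}) = X$ yields $X \sim Y$; the dual case of two distinct minimal elements is handled symmetrically via meets of $Y \cup \{u_i\}$. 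The remaining case is when $X - Y$ has a unique minimum $u$ and a unique maximum $w$ with $u < w$: the hypothesis gives $(u,w) \in A$, and joining $\pid w \sim \pid w - \pfi u$ with the loop at $Y$ yields $X \sim Y$, since the unique maximum forces $X - Y \subseteq \pid w$ (giving $\pid w \cup Y = X$) and the unique minimum forces $X - Y \subseteq \pfi u$ (giving $(\pid w - \pfi u) \cup Y = Y$).

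For uniqueness, suppose $A'$ also satisfies \eqref{id:minmaxd} with $G = G(J_L, A')$. For any $x \leq y$ in $J_L$, the symmetric difference of $\pid y$ and $\pid y - \pfi x$ is exactly the interval $[x,y]$; so these two downsets are adjacent in $G(J_L, A')$ iff every arc of $[x,y]$ lies in $A'$, which by \eqref{id:minmaxd} is equivalent to $(x,y) \in A'$ itself. Hence $(x,y) \in A'$ iff $\pid y \sim \pid y - \pfi x$ in $G$, so $A' = A$.
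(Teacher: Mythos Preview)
Your argument is correct and shares the paper's scaffolding: the same definition of $A$ (the paper writes $\pid{x}\cap C_y$, which is your $\pid{y}\setminus\pfi{x}$ with the roles of $x,y$ swapped), the same forward inclusion via a meet and a join against loops, and the same reduction of the reverse inclusion to the comparable case by applying \eqref{id:vee} at $D\cap D'$. The genuine difference is in how you establish the comparable case, that $Y\subseteq X$ with every arc in $X\setminus Y$ lying in $A$ forces $X\sim Y$. The paper does this in one pass: for each $x$ it intersects the generating edges $\pid{x}\sim\pid{x}\setminus\pfi{y}$ over all relevant $y$, then takes the union over $x$ (padding with loops for $x\in D\cap D'$) and applies \eqref{id:minmax} once to land on $D\sim D\cap D'$. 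Your induction on $|X\setminus Y|$, peeling off a non-unique extremal element of $X\setminus Y$ or, when both extremes are unique, invoking the single arc $(u,w)$, is more structural and sidesteps the bookkeeping of the big meet/join; the price is a three-way case split. Your uniqueness argument via the interval $[x,y]$ is also cleaner than the paper's somewhat terse version.

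One small wrinkle: invoking Proposition~\ref{prop:HasseSubgraph} for the base case $|X\setminus Y|=1$ tacitly assumes $G$ is connected, which the theorem does not. This is harmless, because your own third sub-case already covers it with $u=w$: the loop $(w,w)$ is an arc of $J_L$ internal to $X\setminus Y$, hence lies in $A$ by hypothesis, giving $\pid{w}\sim\pid{w}\setminus\{w\}$, and the join with the loop at $Y$ yields $X\sim Y$ exactly as in your $u<w$ case.
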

   \begin{proof}
     Let $G$ be compatible with a distributive lattice $L$. By \cite{Bi}
     we have that $L \iso \cD(J_L)$, so we denote vertices of $G$ by downsets of the 
     poset $J_L$. 

     We define a sub-digraph $A = A(G,J_L)$ of $J_L$ as follows. 
     For a vertex $p$ of $J_L$, let 
      \[ C_p = X \setminus \pfi{p} = \bigcup \{ X \in \cD \mid p \not\in X \} \] be the
     maximum downset not containing $p$.  
     For each arc $y \to x$ of $J_L$, let $y \to x$
     be in $A$ if $\pid{x} \cap C_y$ and $\pid{x}$ are adjacent in $G$.
     We show that $G \iso G(J_L, A)$. 
     
     Let $D$ and $D'$ be adjacent downsets of $J_L$. To show that $D$ and $D'$ are 
     adjacent in $G(J_L,A)$, it is enough to show, without loss of generality,
     that any arc $(y,x) \in J_L$ for $x,y \in (D \setminus D')$,
     is in $A$. So we must show that $\pid{x} \cap C_y$ and $\pid{x}$ are adjacent. 
     As $D' \sim D$ and $\pid{x} \sim \pid{x}$  in $G$ we have that 
        \[ (\pid{x} \cap D') \sim (\pid{x} \cap D) = \pid{x}. \]
     But as $y \not\in D'$ we have that $D' \leq C_y$, and so we also have
        \[ \pid{x}\cap D' \leq \pid{x} \cap C_y  \leq \pid{x}; \]
     and so by \eqref{id:minmax} we get  $\pid{x} \sim (\pid{x} \cap C_y)$, as
     needed. 
     
     On the other hand, let $D$ and $D'$ be non-adjacent downsets of $J_L$.
     Then we must show that there is some arc $(y,x) \in J_L$ for 
     $x,y \in (D \setminus D')$ or $x,y \in (D' \setminus D)$ that is not in
     $A$. Assume that all such arcs with $x,y \in (D \setminus D')$ are in $A$.    
     Then for each, we saw above that $\pid{x} \sim (\pid{x} \cap C_y)$. 
     Fixing $x$ and taking the intersection over all $y \leq x$ in $D \setminus D'$,
     we get
     \[ \pid{x} = \bigcap \pid{x} \sim \bigcap (\pid{x} \cap C_y) = \bigcap (\pid{x} \setminus \pfi{y})
       =:T_x \]
      where $T_x = \pid{x} \setminus \bigcup \pfi{y}$ is contained in 
    $\pid{x} \setminus D'$ as the union is over all $y \leq x$ that are
    in $D \setminus D'$. 
     Now taking the union over all $x \in D \setminus D'$ we get that
     \[ D = \bigcup \pid{x} \sim \bigcup T_x \subseteq \bigcup (\pid{x} \setminus D') \subseteq (D \setminus D'). \]
      By \eqref{id:minmax} we get that $D \sim D \setminus D'$.
       
     Similarily we get that $D' \sim (D \cap D')$.
     But as $D \cap D' \leq D, D'$ we get from \eqref{id:vee} that 
     $D \sim D'$, a contradiction.

     Now,  Lemma \ref{lem:extension1} allows us
       to assume that $A$ satisfies \eqref{id:minmax}. The uniqueness of $A$ then follows by 
       observing that $G(J_L,A')$ would be different for any other sub-digraph $A'$ of $J_L$
       satisfying \eqref{id:minmax}: this is simple let $(y,x)$ be an arc of $A'$ but not
       $A$. Then the edge $\pid{x} \sim \pid{y}$ is in $G(J_L,A')$ but not in $G(J_L,A)$. 
   \end{proof}

  Corollary \ref{cor:char2} is  immediate from  Theorem \ref{thm:char2}
  and the following lemma.   

 \begin{lemma}\label{lem:pographiscompat}
   If $P$ is a poset and $A \subseteq P$, then 
   $\cD(P)$ is compatible with $G = G(P,A)$.
 \end{lemma}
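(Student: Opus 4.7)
The plan is to verify condition \eqref{id:poly} directly, working from the characterization of adjacency in $G(P,A)$ given by Lemma \ref{lem:altGPA}. Recall that in the distributive lattice $\cD(P)$, the meet of two downsets is their intersection and the join is their union, so I must show that intersections and unions of adjacent pairs remain adjacent.

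First I would fix the setup: suppose $D_1 \sim D_1'$ and $D_2 \sim D_2'$ in $G(P,A)$. The goal is to show $D_1 \cap D_2 \sim D_1' \cap D_2'$ and $D_1 \cup D_2 \sim D_1' \cup D_2'$. By Lemma \ref{lem:altGPA}, the hypothesis on each pair says: whenever $(y,x) \in \comp{A}$ and $x$ lies in one of the downsets, then $y$ lies in the other (and vice versa).

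For the meet, consider any $(y,x) \in \comp{A}$ with $x \in D_1 \cap D_2$. Then $x \in D_1$ forces $y \in D_1'$, and $x \in D_2$ forces $y \in D_2'$, so $y \in D_1' \cap D_2'$. The symmetric statement with the roles of the primed and unprimed downsets reversed is identical, so Lemma \ref{lem:altGPA} gives $D_1 \cap D_2 \sim D_1' \cap D_2'$. For the join, suppose $(y,x) \in \comp{A}$ with $x \in D_1 \cup D_2$. Then $x$ lies in $D_i$ for some $i \in \{1,2\}$, which forces $y \in D_i' \subseteq D_1' \cup D_2'$. Again the reverse direction is symmetric, and Lemma \ref{lem:altGPA} delivers $D_1 \cup D_2 \sim D_1' \cup D_2'$.

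There is essentially no obstacle here: once adjacency is rephrased as the ``arc-closure'' condition of Lemma \ref{lem:altGPA}, the claim reduces to the trivial observation that this condition is preserved under componentwise intersection and union of the two defining downset families. The only point worth stating explicitly is that meet and join in $\cD(P)$ are exactly $\cap$ and $\cup$, so that the verification of \eqref{id:poly} is in fact a verification about set-theoretic operations on downsets.
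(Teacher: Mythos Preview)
Your proof is correct and follows essentially the same approach as the paper's: both invoke Lemma~\ref{lem:altGPA} to rephrase adjacency as an arc-closure condition on $\comp{A}$, then verify that this condition is preserved under componentwise $\cap$ and $\cup$. The only cosmetic differences are notation ($D_1,D_1',D_2,D_2'$ versus $D,D',E,E'$) and the order in which meet and join are treated.
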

 \begin{proof}
   We use Lemma \ref{lem:altGPA} for the definition of adjacency in $G$.
   Assume that $D \sim D'$ and $E \sim E'$. 
   We must show that $(D \cup E) \sim (D' \cup E')$ and
   $(D \cap E) \sim (D' \cap E')$.
   For the former, let $x \in D \cup E$ and $(x, y)$ be in $\comp{A}$.
   Then $x \in D$ or $E$, 
   so as $D \sim D'$ and $E \sim E'$, we have that $y \in D'$ or $E'$.
   Thus $y \in D' \cup E'$. 
   That $x \in D' \cup E'$ implies $y \in D \cup E$ is the same, so
    $(D \cup E) \sim (D' \cup E')$. 
   The proof of the latter is similar.
 \end{proof}


     Now, consider $G(C,A)$ where $C$ is a chain. All downsets are of the form 
     $\pid{c}$ for some $c \in C$, or $\emptyset=:\pid{-1}$. 
     As we may assume that $A$ satisfies \eqref{id:minmax}, two downsets 
     $\pid{x}$ and $\pid{y}$, for $y \leq x$ are adjacent if and only if $(y+1, x)$
     is an arc of $A$. The following is then clear, and is the starting point of our 
     next characterisation of reflexive $\DL$-graphs. 

     \begin{fact}\label{fact:PI}
       For a chain $C$ and a sub-digraph $A$, $G(C,A)$ satisfies \eqref{id:minmax},
       so is a proper interval graph. 
     \end{fact}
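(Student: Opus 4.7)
The plan is to take the natural inclusion ordering on $\cD(C)$ as the linear ordering on $V(G(C,A))$. Since $C$ is totally ordered, say $C = \{c_0 < c_1 < \dots < c_n\}$, the downsets of $C$ form the chain $\emptyset \subsetneq \pid{c_0} \subsetneq \pid{c_1} \subsetneq \dots \subsetneq \pid{c_n}$, so we may label the $n+2$ vertices of $G(C,A)$ as $0, 1, \dots, n+1$ according to their rank in this chain. It then suffices to verify that \eqref{id:minmax} holds with respect to $\subseteq$.

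To do so, I would take downsets $D_1 \subseteq D_2 \subseteq D_3 \subseteq D_4$ and assume $D_1 \sim D_4$ in $G(C,A)$; the goal is $D_2 \sim D_3$. By Definition \ref{def:GPA}, the assumption means that every arc $(x,y)$ of $C$ with $x,y \in D_4 \setminus D_1$ lies in $A$ (there are no arcs to check in $D_1 \setminus D_4 = \emptyset$). For the conclusion, $D_2 \subseteq D_3$ similarly forces $D_2 \setminus D_3 = \emptyset$, so I only need to check that every arc $(x,y)$ of $C$ with $x,y \in D_3 \setminus D_2$ lies in $A$. Monotonicity of set difference gives $D_3 \setminus D_2 \subseteq D_4 \setminus D_1$, so any such arc already sits in $A$ by the adjacency $D_1 \sim D_4$. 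This yields $D_2 \sim D_3$, establishes \eqref{id:minmax}, and exhibits $G(C,A)$ as a proper interval graph.

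There is essentially no obstacle: the entire verification reduces to the containment $D_3 \setminus D_2 \subseteq D_4 \setminus D_1$. In fact the same calculation shows that $G(P,A)$ satisfies the partial-order analogue of \eqref{id:minmax} (with $\subseteq$ as $\leq$) for an arbitrary poset $P$, which is consistent with $G(P,A)$ being a $\DL$-graph by Lemma \ref{lem:pographiscompat} together with Fact \ref{fact:ids}. The hypothesis that $C$ is a chain is used only to ensure that $\cD(C)$ is itself linearly ordered, which is required by the definition of a proper interval graph in min-max form.
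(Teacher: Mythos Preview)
Your proof is correct, and in fact it is cleaner than the paper's. The paper's argument first invokes Lemma~\ref{lem:extension1} to assume that $A$ satisfies \eqref{id:minmaxd}, then uses the specific labelling of downsets of a chain to reduce adjacency of $\pid{u}$ and $\pid{w}$ (for $u<w$) to the single arc $(u+1,w)$ being in $A$, and finally applies \eqref{id:minmaxd} to recover the arcs needed for the intermediate adjacencies. Your argument bypasses all of this: you work directly from Definition~\ref{def:GPA} and the single set-theoretic containment $D_3\setminus D_2\subseteq D_4\setminus D_1$, never needing any normalisation of $A$. As you observe, this yields the partial-order version of \eqref{id:minmax} for $G(P,A)$ with an arbitrary poset $P$, with the chain hypothesis entering only to make the order on $\cD(C)$ total. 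The paper's route buys nothing extra here; your route is shorter, more general, and avoids an unnecessary appeal to Lemma~\ref{lem:extension1}.
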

     \begin{proof}
       Assume that $\pid{u} \subsetneq \pid{v} \subsetneq \pid{w} \mbox{ and } \pid{u} \sim \pid{w}$
       in $G(C,A)$. 
       So $u < v < w$ and $u+1 \to w$ in $A$. 
       As we may assume that $A$ satisfies \eqref{id:minmax} we have 
       $u+1 \to v$ and $v+1 \to w$ in $A$, and
       so $\pid{u} \sim \pid{v}$ and $\pid{v} \sim \pid{w}$.  
     \end{proof}

 \section{Reflexive $\DL$-graphs as subgraphs of products of Proper Interval Graphs}\label{sec:main2}

   In this section we prove Theorem \ref{thm:embedding} and give some related results.
    
   \begin{theorem}\label{thm:embedding} 
     For any $\DL$-pair $(G,L)$,  there is an embedding 
     $L \iso \cP \setminus \cV$,  where $\cV$ is a union of 
     intervals of the form $\nVBs$, of $L$ into a product $\cP = \prod P_i$ of chains. 
     Further, for each $P_i$ there is a proper interval graph $G_i$ compatible with 
     $P_i$ such that $G$ is the subgraph of $\cG = \prod_{i = 1}^dG_i$ induced by vertices in $L$.
   \end{theorem}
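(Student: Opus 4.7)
By Theorem \ref{thm:char2}, we may identify $L$ with $\cD(J_L)$ and $G$ with $G(J_L, A)$ for some sub-digraph $A$ of $J_L$ satisfying \eqref{id:minmaxd}. The strategy is to realize the embedding via a \emph{chain cover} of $J_L$ adapted to $\comp{A}$, rather than a Dilworth chain partition.

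First I would choose a family $\cC = \{C_1, \dots, C_d\}$ of chains of $J_L$, not necessarily disjoint, such that \textbf{($\star$)} every arc $(y,z) \in \comp{A}$ has both endpoints in some common $C_i$. Such a cover always exists trivially: include each arc of $\comp{A}$ as a two-chain and cover any remaining vertex by a singleton. Let $J_\cP$ be the disjoint union of the $C_i$ as a poset (so an element $y \in J_L$ contributes one copy $y_i$ in each $C_i$ containing it, with no relations across chains), and let $\pi \colon J_\cP \to J_L$ be the natural surjection $y_i \mapsto y$. Since $\pi$ is order-preserving, the pullback $E \mapsto \pi^{-1}(E)$ gives a lattice embedding $\cD(J_L) \hookrightarrow \cD(J_\cP) = \prod_i \cD(C_i) =: \cP = \prod_i P_i$, a product of chains.

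Next I would identify $\cV := \cP \setminus \pi^{-1}(\cD(J_L))$ as a union of vertex intervals. A downset $D$ of $J_\cP$ lies in the image exactly when it is \emph{$\pi$-saturated} (for each $y \in J_L$, either all copies $y_i$ lie in $D$ or none do) and $\pi(D)$ is a downset of $J_L$. A saturation failure for $y$ with copies $y_i \in C_i,\ y_j \in C_j$ is the set $\{D : y_i \in D,\ y_j \notin D\}$, which is precisely $\nVB{\alpha}{i}{\beta}{j}$ with $\alpha$ the position of $y_i$ in $C_i$ and $\beta$ one less than the position of $y_j$ in $C_j$. A downset failure coming from a comparability $y <_{J_L} z$, witnessed by a copy $z_i \in D$ and a copy $y_k \notin D$, likewise carves out a vertex interval; one checks that necessarily $i \neq k$, since otherwise $y_k \leq z_k$ inside the chain $C_k$ and the downset property of $D$ would force $y_k \in D$. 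Taking the union over all such failures expresses $\cV$ in the required form.

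Finally I would set $G_i := G(C_i, A_i)$, where $A_i$ consists of those arcs $(p,q)$ of $C_i$ whose images $(\pi(p),\pi(q))$ lie in $A$. By Lemma \ref{lem:pographiscompat} and Fact \ref{fact:PI}, each $G_i$ is a proper interval graph compatible with $P_i$. To verify that the induced subgraph of $\prod_i G_i$ on the image of $L$ is $G$, I would unfold componentwise adjacency via Lemma \ref{lem:altGPA}: in the product, $\pi^{-1}(E) \sim \pi^{-1}(E')$ iff, for every chain $C_i$, no arc of $\comp{A}$ supported inside $C_i$ has both endpoints in $(E \setminus E') \cap C_i$ or in $(E' \setminus E) \cap C_i$. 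Hypothesis ($\star$) says every arc of $\comp{A}$ is supported in some $C_i$, so this chain-local condition collapses to the global adjacency condition of $G(J_L,A)$. The heart of the argument---and the reason a chain \emph{cover}, rather than a Dilworth chain partition, must be used in general---is securing ($\star$); once it is in hand, the remaining verifications are direct computations.
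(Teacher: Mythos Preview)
Your proposal is correct and follows essentially the same route as the paper: invoke Theorem~\ref{thm:char2} to write $G = G(J_L,A)$, take a chain \emph{cover} of $J_L$ (not a Dilworth partition) chosen so that every arc of $\comp{A}$ lies inside some chain, set $G_i = G(C_i,A_i)$, and then verify that the chain-local adjacency condition in $\prod G_i$ collapses to the global one in $G(J_L,A)$ precisely because of that covering property. The paper organizes this through an intermediate Lemma~\ref{lem:tight} (computing $G$ as $\cG \setminus \cV \setminus \cE$ for an arbitrary cover) and Corollary~\ref{cor:tight} (characterising when $\cE$ is vacuous as $\redcomp{A} \subseteq \bigcup C_i$), and outsources the description of $\cV$ to~\cite{Si15}; you instead sketch the $\cV$ computation directly and use the full $\comp{A}$ rather than the reduced $\redcomp{A}$ in condition~($\star$), which is a harmless strengthening. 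The final construction---take each arc of the complement as a two-chain and singletons elsewhere---is identical.
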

   
    Before proving this, we observe that this gives Corollary \ref{cor:charPI}.

    \begin{proof}[Proof of Corollary \ref{cor:charPI}]
     If $G$ is a $\DL$-graph then Theorem \ref{thm:embedding} gives us the necessary embedding
     of $G$ into  a product of proper interval graphs. 

     On the other hand, assume we get $G$ from a product $\cG = \prod G_i$ of proper interval
     graphs $G_i$ in min-max form by removing vertex intervals $\nVBs$. The ordering on the 
     $G_i$ is a chain lattice $P_i$, so induces on $V(\cG)$ a lattice $\cP = \prod P_i$.
     By a result in \cite{Ri74}, the subset induced by removing sets of the form $\nVBs$ is a
     sublattice. 
     By Fact \ref{fact:sublat} it is therefore compatible with the subgraph $G$ of $\cG$ that it
     induces.  
    \end{proof}

   We start with some results from the literature that will help us prove Theorem \ref{thm:embedding}. 
  \subsection{Setup for the proof of Theorem \ref{thm:embedding}} 

  The first statement of Theorem \ref{thm:embedding} is acually from \cite{Si15}. 
  We explain, as we will have to build on this. 
  Generalising the notion of a {\em chain decomposition} in which the subchains
  must be disjoint,  a {\em chain cover} of a poset $P$ is family
  $\cC = \{C_1, \dots, C_d\}$ of 
  subchains of $P$ such that every element of $P$ is in one chain.
  Given $\cC$, label the elements of $P$ so that the subchain $C_i$ is 
  $\ce{1}{i} \prec\dots \prec \ce{{n_i}}{i}$ for some $n_i$; if an element
  is in more than one chain, it gets more than one label.    
  It is clear that a downset $D$ of $P$
  is uniquely defined by 
  the tuple $e_\cC(D) = (x_1, \dots, x_d)$
  where $x_i = |D \cap C_i|$. (Note that $D$ is thus the downset
  generated by the set $\{\ce{x_i}i \mid i \in [d] \}$.) 

  As Dilworth \cite{Di50} observed in the case that $\cC$ is a decomposition, 
  we observed in \cite{Si15} that $e_\cC$ is in fact a lattice embedding of $\cD(P)$
  into the  product of chains
  $\cP_\cC = \prod_{i = 1}^d P_i$ where $P_i$ is the chain 
      $0 \prec 1 \prec \dots \prec {n_i}$
  with one more element than $C_i$. 
  Thus by Birkoff's  result from \cite{Bi}, every chain cover of $J_L$ gives an embedding 
  $e_\cC$ of $L$ as a sublattice of a product $\cP_\cC$ of chains.
  
  In (Corollary 6.6 of) \cite{Si15} we showed that every embedding of $L$ as a
  sublattice of a product of chains such that $L$ contains the zero and unit of 
  the product, is $e_\cC$ for some chain cover $\cC$ of $J_L$.    
  
  The following notation will also be useful, and explains the notation $\nVBs$. 
  Given a product of chains $\cP_\cC = \prod_{i = 1}^d P_i$, let 
   \[ \jis = (\underbrace{0,\dots,0,\alpha}_{i},0,\dots,0) \qquad \mbox{ and } \qquad \mis = (n_1, \dots,n_{j-1},\beta,n_{j+1}, \dots,n_d)\]
  for all $i,j \in [d]$ and $\alpha,\beta$ with $0 \leq \alpha \leq n_i$ and $0 \leq \beta \leq n_j$.
 
 %

   \subsection{Proof of Theorem \ref{thm:embedding}} 

   Let $(G,L)$ be a $\DL$-pair, so by Theorem \ref{thm:char2}, $G = G(J_L,A)$ for some
   sub-digraph $A$ of $J_L$, and let $\cC$ be a chain cover of $J_L$.
   For each chain $C_i \in \cC$ let $A_i$ be the subgraph of $A$ induced by the
   vertices of $C_i$. By Fact \ref{fact:PI} we have that $G_i = G(C_i,A_i)$,
   on the chain $P_i$ is a proper interval graph. 
   Let $\cG =  \prod G_i$ be the product of these proper interval graphs.
   The embedding $e_\cC: \cD(J_L) \to \cP_\cC$ embeds $V(G)$ as a subset of $V(\cG)$. 

   In \cite{Si15}, we observed the following, using a result of \cite{Ri74}.
   \begin{proposition}
    Where $L$ is the image of the embedding $e_\cC: \cD(J_L) \to \cP_\cC$, 
       \[ L = \cP_\cC \setminus \cV \] 
    where $\cV$ is the union, over comparable pairs $\cesb \leq \cesa$ in $J_L$, of the intervals
    \[  \nVB\alpha{i}\beta{j} =  \{ x \in \cP_\cC \mid \alpha \leq x_i \mbox{ and } x_j \leq \beta \}.\]
  \end{proposition}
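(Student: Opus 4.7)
The plan is to prove the set equality $L = \cP_\cC \setminus \cV$ directly, by unpacking when a tuple in $\cP_\cC$ actually corresponds to a downset under the embedding $e_\cC$. Recall from the paper's setup that if $e_\cC(D) = x$, then $D$ is the downset of $J_L$ generated by $\{\ce{x_k}{k} : k \in [d]\}$ (or, equivalently, by those $\ce{x_k}{k}$ with $x_k > 0$). So for any $x = (x_1, \dots, x_d) \in \cP_\cC$, I would define the candidate preimage $D_x$ to be the downset generated by this set, and observe that $x \in L$ if and only if $|D_x \cap C_j| = x_j$ for every $j$. One inequality, namely $|D_x \cap C_j| \geq x_j$, is automatic because $\ce{1}{j}, \dots, \ce{x_j}{j}$ all lie in $D_x$; so $x \in L$ exactly when the reverse inequality $|D_x \cap C_j| \leq x_j$ also holds for all $j$.

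Next, I would prove $L \cap \cV = \emptyset$. Take any comparable pair $\cesb \leq \cesa$ in $J_L$ and any tuple $x \in \nVB{\alpha}{i}{\beta}{j}$. From $x_i \geq \alpha$, the element $\cesa = \ce{\alpha}{i}$ lies below $\ce{x_i}{i}$ in $C_i$ and hence is in $D_x$. Since $\cesb \leq \cesa$, the element $\cesb = \ce{(\beta+1)}{j}$ is in $D_x$ as well, which forces $|D_x \cap C_j| \geq \beta + 1 > \beta \geq x_j$. By the criterion above, $x \notin L$.

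For the reverse inclusion $\cP_\cC \setminus \cV \subseteq L$, I would argue contrapositively: suppose $x \notin L$. Then there is some $j$ with $|D_x \cap C_j| > x_j$, so some $\ce{\gamma}{j} \in D_x$ with $\gamma > x_j$. By the definition of $D_x$ as a downset generated by the $\ce{x_k}{k}$, there is an index $i$ with $x_i > 0$ and $\ce{\gamma}{j} \leq \ce{x_i}{i}$ in $J_L$. Setting $\alpha = x_i$ and $\beta = \gamma - 1$ yields a comparable pair $\cesb = \ce{(\beta+1)}{j} \leq \ce{\alpha}{i} = \cesa$ in $J_L$, together with $x_i = \alpha$ and $x_j \leq \gamma - 1 = \beta$, which places $x$ in $\nVB{\alpha}{i}{\beta}{j} \subseteq \cV$.

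I do not expect a serious obstacle here: the argument is essentially bookkeeping on the bijection between downsets and their chain‑wise cardinality vectors, modulo the mild shift between ``position $\gamma$ in chain $C_j$'' and ``coordinate value $x_j$'' (with the $+1$ in $\cesb = \ce{(\beta+1)}{j}$ handling this offset). The only subtlety worth flagging is that elements of $J_L$ may appear in several chains of the cover $\cC$, but this redundancy is harmless since $D_x$ is defined as the downset generated by the chosen representatives and depends only on the generated ideal, not on which occurrences are used.
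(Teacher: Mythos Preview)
Your argument is correct. The criterion you isolate---that $x \in L$ precisely when the downset $D_x$ generated by $\{\ce{x_k}{k} : x_k > 0\}$ satisfies $|D_x \cap C_j| \leq x_j$ for all $j$---is exactly the right handle, and both inclusions follow cleanly from it. The one step worth making explicit is that in the contrapositive direction the witnessing index $i$ cannot equal $j$: if $\ce{\gamma}{j} \leq \ce{x_j}{j}$ in $J_L$ then, since $C_j$ is a subchain of $J_L$ labelled in order, this forces $\gamma \leq x_j$, contradicting $\gamma > x_j$. You allude to this but it deserves a sentence.

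As for comparison with the paper: the paper does not actually prove this proposition here. It is quoted from \cite{Si15}, where it is derived using Rival's description \cite{Ri74} of maximal sublattices of finite distributive lattices. Your proof is therefore genuinely different in that it is self-contained and elementary, bypassing the structural results on maximal sublattices in favour of a direct bookkeeping argument on the embedding $e_\cC$. The trade-off is that the cited route situates the result within a broader theory of sublattice representations, whereas yours gets the specific statement with minimal overhead and no external dependencies---which is entirely appropriate for the use made of it in this paper.
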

  
   So any chain cover $\cC$ of $J_L$ yields an embedding of $L$ into $\cP_\cC$ as required by the
   first statement of Theorem \ref{thm:embedding}. To show that $G$ can be taken as an induced
    subgraph of $\cG$
   we need a similar statement about how the edges of $G$ relate to those of $\cG$. 
   We get this by looking at how our construction $G(J_L,A)$ behaves notationally under the
   embedding $e_\cC$.

    To finish proving Theorem \ref{thm:embedding} we must show that $e_\cC$ embeds $G$ as an
    induced subgraph of $\cG$. This is not true of every chain cover $\cC$, but every chain
    cover embeds it as a subgraph.
   
    For each $i,j\in [d]$ and $0 \leq \alpha \leq n_i$ and $0 \leq \beta \leq n_j$, let
      \[ \nEB{\alpha}{i}{\beta}{j} := \left\{ \{x, y\} \mid \alpha \leq x_i \mbox{ and }
           y_j \leq \beta \right\}. \]

   \begin{lemma}\label{lem:tight}
      Where $\cC$ is a chain cover of $J_L$, $G = G(J_L,A)$ is a subgraph of the
      product $\cG = \prod G_i$ of proper interval graphs $G_i = (C_i,A_i)$. 
      In fact $G = \cG \setminus \cV \setminus \cE$ where 
       \begin{gather*}
           \cV = \bigcup \{ \nVBs \mid \cesb \leq \cesa \in J_L \} \\
          \cE = \bigcup \{ \nEBs \mid \cesb \to \cesa \in \comp{A} \}.
       \end{gather*}
   \end{lemma}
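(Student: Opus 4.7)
The plan is to reduce all adjacency questions to Lemma \ref{lem:altGPA}, which characterizes adjacency in both $G = G(J_L, A)$ and each factor $G_i = G(C_i, A_i)$ in terms of witnessing arcs from the complementary digraph. The vertex-set identification $V(G) \cong \cP_\cC \setminus \cV$ under the embedding $e_\cC$ is already the preceding proposition, so the work is entirely about edges.

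First I would verify that $G$ is a subgraph of $\cG$. Suppose $D \sim D'$ in $G$ and fix $i \in [d]$; any arc $(y, x) \in \comp{A_i}$ has both endpoints in $C_i$ and lies in $\comp{A}$, so the adjacency condition for $G$ forces $y \in D' \cap C_i$ whenever $x \in D \cap C_i$, and symmetrically. Lemma \ref{lem:altGPA} applied to $G_i$ then yields $D \cap C_i \sim D' \cap C_i$ for every $i$, which is to say $e_\cC(D) \sim e_\cC(D')$ in $\cG$.

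The heart of the proof is identifying the edges of $\cG$ that must be removed to produce $G$ with exactly those in $\cE$. Suppose $(D, D') \in E(\cG)$ but $D \not\sim D'$ in $G$. Lemma \ref{lem:altGPA} supplies an arc $(y, x) \in \comp{A}$ with, without loss of generality, $x \in D$ and $y \notin D'$, and downset closure forces $\{x, y\} \subseteq D \setminus D'$. Picking $i, j, \alpha, \beta$ so that $x = \cesa$ and $y = \cesb$, the step I expect to require the most care is ruling out $i = j$: but in that case $(y, x)$ would lie in $\comp{A_i}$ and witness $D \cap C_i \not\sim D' \cap C_i$ in $G_i$, contradicting $(D, D') \in E(\cG)$. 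Thus $i \neq j$, and writing $e_\cC(D) = (D_1, \ldots, D_d)$ and $e_\cC(D') = (D'_1, \ldots, D'_d)$, the conditions $x \in D$ and $y \notin D'$ translate to $\alpha \leq D_i$ and $D'_j \leq \beta$, i.e.\ $\{e_\cC(D), e_\cC(D')\} \in \nEBs \subseteq \cE$.

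For the converse, if $\{e_\cC(D), e_\cC(D')\} \in \nEBs$ for some $\cesb \to \cesa \in \comp{A}$, then (up to swapping the two endpoints) $\cesa \in D$ and $\cesb \notin D'$; downset closure together with $\cesb \leq \cesa$ forces both $\cesa, \cesb$ into $D \setminus D'$, so Lemma \ref{lem:altGPA} gives $D \not\sim D'$ in $G$. Combining the two directions yields $G = \cG \setminus \cV \setminus \cE$ as claimed.
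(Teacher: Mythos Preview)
Your proof is correct and follows essentially the same approach as the paper: both arguments translate non-adjacency in $G$ into the existence of a witnessing arc $(\cesb,\cesa)\in\comp{A}$ via Lemma~\ref{lem:altGPA}, and then read off the coordinate conditions $\alpha\le D_i$, $D'_j\le\beta$ defining $\nEBs$. The paper packages this as a single biconditional claim (valid for all vertex pairs of $G$, without assuming membership in $E(\cG)$) and uses the fact that $D,D'\notin\nVB\alpha i\beta j$ to simplify the coordinate inequalities, whereas you split into two implications and invoke the hypothesis $(D,D')\in E(\cG)$ in the forward direction.

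One remark: your ``$i\neq j$'' step is unnecessary. Once you have $\cesa\in D$ and $\cesb\notin D'$ you already get $D_i\ge\alpha$ and $D'_j\le\beta$, hence $\{e_\cC(D),e_\cC(D')\}\in\nEBs$, regardless of whether $i=j$. The observation you make (that if $i=j$ the witnessing arc would kill the edge already in the factor $G_i$) is true, but it is not needed for the conclusion, and the definition of $\cE$ does not exclude the case $i=j$.
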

   \begin{proof}
    Let $D_x$ and $D_y$ be downsets of $J_L$ and let $x = e_\cC(D_x)$ and $y = e_\cC(D_y)$.
   
    If $D_x \sim D_y$ we have in particular that for each $i$, 
    $\ce{x_i+1}{i} \to \ce{y_i}{i}$ if $x_i < y_i$ and 
    $\ce{y_i+1}{i} \to \ce{x_i}{i}$ if $y_i < x_i$. So $\ce{x_i}{i} \sim \ce{y_i}{i}$ in 
    $G_i(C_i,A_i)$. This shows that $G$ is a subgraph of $\cG$.  

    As we mentioned above, it follows from \cite{Si15} that $V(G) = V(\cG) \setminus \cV$
    so we are done with the following claim.

   \begin{claim*}
     Vertices $x$ and $y$ of $G$ are  non-adjacent in $G$ if and only if
     there is some arc $\cesb \to \cesa$ in $\comp{A}$ such that $\{x,y\}$ is in the set
     $\nEB{\alpha}{i}{\beta}{j}$  of edges of $\cG$.
   \end{claim*}  
   \begin{proof}\claimproof
    Let $D_x$ and $D_y$ be the downsets of $J_L$ for which $e_\cC(D_x) = x$ and $e_\cC(D_y) = y$.
    Then $D_x \not\sim D_y$ if and only if there is some $\cesb \to \cesa$ in $\comp{A}$
   with $\cesa,\cesb$ in $D_x \setminus D_y$ ( or $D_y \setminus D_x$, but wlog we assume the former).
    This is true if and only if
    \begin{equation}\label{edge1}
         y_i < \alpha \leq x_i \mbox{ and } y_j \leq \beta <  x_j.
    \end{equation}
      But since $\cesb \to \cesa$ in $\comp{A}$ we certainly have that $\cesb \leq \cesa$ in $J_L$, so
    $x$ and $y$ are not in $\nVB\alpha{i}\beta{j}$. This means that neither of 
    \begin{equation}\label{edge2}
         (\alpha \leq x_i \mbox{ and } x_j \leq \beta) \mbox{ or }  (\alpha \leq y_i \mbox{ and } y_j \leq \beta) 
    \end{equation}
     hold.  As \eqref{edge1} and the negation of \eqref{edge2} are logically equivalent to 
     \[  \alpha \leq x_i \mbox{ and }  y_j \leq \beta, \]
     we get the claim.
   \end{proof} 
  
   This completes the proof of the lemma.   
 \end{proof}

   Now many of the sets $\nEBs$ in the above lemma may not actually contain edges of 
   $\cG$, so it begs the question: when is $G$ an induced subgraph of $\cG$. 
   Clearly it is induced if and only if we can express it as $G = \cG \setminus \cV$,
   but this does not mean that $\comp{A}$ must be empty. 
   We have been using Lemma \ref{lem:extension1} to remove edges to $A$ and assume that 
   it satisfies \eqref{id:minmax}; we may also use it to add all useless edges, doing
   so the complement (in $J_L$) is a 'reduced' version of $\comp{A}$: a graph $\redcomp{A}$ that
   generates the usual $\comp{A}$ by composition with $J_L$.

   \begin{corollary}\label{cor:tight}
     Where $\cC$ is a chain cover of $J_L$, $G = G(J_L,A)$ is $\cG \setminus \cV$
     if and only if $\redcomp{A}$ is a sub-digraph of $\UC = \bigcup C_i$.
   \end{corollary}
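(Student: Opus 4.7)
The plan is to apply Lemma~\ref{lem:tight}, which writes $G = \cG \setminus \cV \setminus \cE$, to reduce the corollary to the statement: $\redcomp{A} \subseteq \UC$ if and only if no edge of $\cG$ between vertices of $V(G) = V(\cG) \setminus \cV$ lies in $\cE$. Throughout, by Lemma~\ref{lem:extension1} I assume $A$ satisfies \eqref{id:minmaxd}, so that $\comp{A}$ is exactly the set of arcs of $J_L$ containing some arc of $\redcomp{A}$ as a subinterval; in particular every $\cesb \to \cesa \in \comp{A}$ admits a minimal witness $(u',v') \in \redcomp{A}$ with $\cesb \le u' \le v' \le \cesa$.

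For the $(\Leftarrow)$ direction, assuming $\redcomp{A} \subseteq \UC$, I would take an arbitrary pair $\{x,y\} \in \nEBs$ coming from some $\cesb \to \cesa \in \comp{A}$, with (WLOG) $\alpha \le x_i$, $y_j \le \beta$ and $x, y \in V(G)$, and derive a contradiction from assuming $\{x,y\}$ is a $\cG$-edge. First, $x, y \notin \nVBs$ forces $y_i < \alpha \le x_i$ and $y_j \le \beta < x_j$. Picking a minimal witness $(u', v') \in \redcomp{A}$ inside $[\cesb, \cesa]$, the hypothesis places $u', v'$ on a common chain $C_k$, so label $u' = \ce{\beta'+1}{k}$ and $v' = \ce{\alpha'}{k}$. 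The dictionary $x_k = |D_x \cap C_k|$ then gives $v' \in D_x \cap C_k$ (from $v' \le \cesa \le \ce{x_i}{i}$), hence $\alpha' \le x_k$; while $u' \in D_y$ would propagate downward through $\cesb \le u'$ to put $\cesb$ in $D_y \cap C_j$, contradicting $y_j \le \beta$, so $y_k < \beta'+1$. The $\cG$-edge assumption then gives $\ce{y_k+1}{k} \to \ce{x_k}{k} \in A_k$, and \eqref{id:minmaxd} applied to $\ce{y_k+1}{k} \le u' \le v' \le \ce{x_k}{k}$ puts $u' \to v'$ into $A$, contradicting $(u', v') \in \redcomp{A}$.

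For the $(\Rightarrow)$ direction I would argue contrapositively, exhibiting an explicit $\cG$-edge between vertices of $V(G)$ that is missing from $G$ whenever some $(u',v') \in \redcomp{A}$ has no chain $C_k$ containing both endpoints. The candidate is $D_x := \pid{v'}$ and $D_y := \pid{v'} \setminus [u',v']$; a short check confirms $D_y$ is a downset with $D_x \setminus D_y = [u',v']$, so $D_x \not\sim D_y$ in $G$ via the $\comp{A}$-arc $(u',v')$. To verify that $\{x,y\} := \{e_\cC(D_x), e_\cC(D_y)\}$ is a $\cG$-edge, I would fix $k \in [d]$ and use convexity of $[u', v'] \cap C_k$ in $C_k$: either the intersection is empty (and then $x_k = y_k$), or it equals $\{\ce{a_k+1}{k}, \dots, \ce{x_k}{k}\}$, reducing the task to placing the arc $\ce{a_k+1}{k} \to \ce{x_k}{k}$ in $A$. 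Since $u'$ and $v'$ do not share a chain of $\cC$, they cannot simultaneously be this arc's endpoints, so it is a strict inner arc of $[u',v']$, forced into $A$ by the minimality of $(u',v')$ in $\redcomp{A}$.

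The main obstacle I foresee is the notational bookkeeping: translating between the multiple labels $\ce{r}{k}$ that an element of $J_L$ may carry on different chains of $\cC$, the membership relations for the downsets $D_x$ and $D_y$, and the coordinate images $x_k, y_k$ via $x_k = |D_x \cap C_k|$. Once that dictionary is in place, both directions rest on the same underlying picture: the minimal arc of $\redcomp{A}$ lying inside $[\cesb, \cesa]$ is either absorbed into $A$ because its endpoints share a chain (giving $(\Leftarrow)$), or else survives as a witness to a missing $G$-edge that $\cV$ does not remove (giving $(\Rightarrow)$).
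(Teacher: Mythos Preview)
Your argument is correct and follows the same overall strategy as the paper: invoke Lemma~\ref{lem:tight} to write $G = \cG \setminus \cV \setminus \cE$, then show that $\cE$ removes no genuine $\cG$-edges on $V(G)$ precisely when $\redcomp{A} \subseteq \UC$.

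There are two points where your execution differs from the paper's, both in your favour. In the $(\Leftarrow)$ direction the paper only treats arcs of $\redcomp{A}$, leaving implicit the reduction from an arbitrary $\comp{A}$-arc to a minimal one; you make this explicit by picking the minimal witness $(u',v')$ inside $[\cesb,\cesa]$ and observing that it inherits membership in $D_x \setminus D_y$. In the $(\Rightarrow)$ direction your witness pair $(D_x,D_y) = (\pid{v'},\ \pid{v'}\setminus [u',v'])$ differs from the paper's choice $(\pid{\cesa},\ \pid{\ce{\beta}{j}})$: yours forces $D_x \setminus D_y = [u',v']$ exactly, so the $k$-th projection is handled directly by convexity of $[u',v']\cap C_k$ and minimality of $(u',v')$, whereas the paper reaches the same conclusion by comparing with the auxiliary $G$-edges $(\pid{\cesa}\setminus\{\cesa\}) \sim \pid{\ce{\beta}{j}}$ and $\pid{\cesa} \sim (\pid{\ce{\beta}{j}}\cup\{\cesb\})$. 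Your route is a bit cleaner; the paper's has the minor advantage that its second downset is a principal ideal rather than a set difference.
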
 
   \begin{proof}
    On the one hand, let $\redcomp{A}$ be a subgraph of $\UC$. We show that no edge of
    $\nEBs$, for any $\cesb \to \cesa$ in $\redcomp{A}$, is in $\cG$.
    Indeed, for any  $\cesb \to \cesa$ in $\redcomp{A}$ 
    there exist $k, \gamma$ and $\delta$ such
    that $\cesb = \ce{(\delta +1)}k$ and $\cesa = \ce\gamma{k}$,
    so  $\mis = \mi{\delta}k$ and $\jis = \ji{\gamma}k$ and so
     \[ \nEBs = \nEB{\gamma}{k}{\delta}{k}. \] 
    For any edge $\{x,y\}$ of $\nEB{\gamma}{k}{\delta}{k}$ we have  $\gamma \leq x$ and
    $y \leq \delta$, but $\{\ce{x}k,\ce{y}k\}$ is not in $G_i$ as 
    $\ce{(\delta +1)}k \not\to \ce\gamma{k}$ in $A_k$.

    On the other hand, assume that $\redcomp{A}$ is not a subgraph of $\UC$.
    Then there is some $\cesb \to \cesa$ in $\redcomp{A}$ such that 
    $\cesa$ and $\cesb$ are not both in $C_k$ for some  $k$.  
    We show that the projection of $\pid{\cesa} \not\sim \pid{\ce{\beta}j}$ onto any of the 
    $G_k$ is an edge of $G_k$, and so $\pid{\cesa} \not\sim \pid{\ce{\beta}j}$ is an edge of 
    $\cG$. 
    Indeed, $(\pid{\cesa} \setminus \cesa) \sim \pid{\ce{\beta}j}$, or otherwise
    there is an arc in $\comp{A}$ in $(\pid{\cesa} \setminus \cesa) \setminus \pid{\ce{\beta}j}$
    which would contradict the existence of $\cesb \to \cesa$ in the reduced $\redcomp{A}$. 
    For any $k$ such that $\cesa \not\in C_k$, 
    $(\pid{\cesa} \setminus \cesa) \sim \pid{\ce{\beta}j}$ projects onto
    the same edge in $G_k$ as does $\pid{\cesa} \not\sim \pid{\ce{\beta}j}$, implying that it is
    in $G_k$.  
    Similarily $\pid{\cesa} \sim   \pid{\ce{\beta}j} \cup \{\cesb\}$ is an arc showing that the
    projection of $\pid{\cesa} \not\sim \pid{\ce{\beta}j}$ onto $G_k$ is an edge of $G_k$ for
    any $k$ such that $\cesb \not\in C_k$.     
   \end{proof}

   \begin{proof}[Proof of Theorem \ref{thm:embedding}]
     Let $G$ be a reflexive $\DL$-graph and $L$ a  compatible distributive 
     lattice. Theorem \ref{thm:char2} provides us a sub-digraph $A$ of $J_L$ such that 
     $G \iso G(J_L,A)$. As we mentioned above, we have from \cite{Si15} that every
     chain cover $\cC$ of $J_L$ yields an embedding $e_\cC: L \iso \cP_\cC \setminus \cV$ into a 
     product of chains. By Corollary \ref{cor:tight} it is induced if and only if 
     $\redcomp{A}$ is a subgraph of $\UC$. We can assure this by taking every arc of $\redcomp{A}$
     as a two element chain in $\cC$ and then covering then rest of $J_L$ with one element chains.  
   \end{proof}

   \subsection{Tight embeddings}\label{sub:nottight} 

     Theorem \ref{thm:embedding} tells us that for every $\DL$-pair $(G,L)$  
     there is an embedding of $L$ into a product $\cP$ of chains $C_i$
     such that $G$ is an induced
     subgraph of the product $\cG$ of proper interval graphs $G_i$. 
     We simply refer to this as an {\em induced embedding} of $(G,L)$.   
     An embedding is {\em tight} if every cover of $L$ is a cover of $\cP$.
     Classical results of Birkoff and  Dilworth correspond tight embeddings of $L$ into
     products of chains to {\em chain decompositions} of $J_L$: chain covers $\cC$ consisting
     of disjoint chains. 
    By Lemma \ref{lem:tight} any $\DL$-pair $(G,L)$ has a tight embedding, but if $\redcomp{A}$
    has any vertices with in-degree or out-degree greater than $2$, then by \ref{cor:tight} this
    it is not induced.  In fact, we will see at the end of the next section that there are $\DL$-graphs
    $G$ such that there are no compatible lattices $L$ for which $(G,L)$ has a tight induced
    embedding.  This is why we had to consider non-tight embeddings, and why we wrote \cite{Si15}.

    \section{Recognition of $R$-thin $\DL$-graphs}\label{sect:recog}

    Recall that a graph is {\em $\R$-thin} if no two vertices have the same neighbourhood.
    As our graphs are reflexive neighbourhoods and closed neighbourhoods are the same thing.

    The factorization of a categorical product was shown to be unique (up to certain
    obviously necessary assumptions which include $R$-thinness)
    in \cite{DI70} by D\"orfler and Imrich.  Feigenbaum and Sch\"affer \cite{FS86} showed
    that a categorical product 
    can be factored in polynomial time.  On the other hand, it was shown in \cite{CKNOS}
    that proper interval graphs can be recognised
    in linear time.  So products $\cG$ of proper interval graphs can be recognised in 
    polynomial time.

   Using techniques discussed in \cite{HIK} we will prove Theorem \ref{thm:poly}, which says
   that there is a polynomial time algorithm to decide whether
     or not an $R$-thin reflexive graph is a $\DL$-graph.  We make no effort
   to optimise our algorithm.    
    First we develop some properties of $\DL$-graphs.

     \subsection{Tightest embeddings and $R$-thinness}

     As we mentioned in Subsection \ref{sub:nottight}, not all $\DL$-pairs $(G,L)$ have
     tight induced embeddings. We call an induced embedding {\em tightest} if
     it minimises then number of {\em non-tight covers}-- covers of $L$ that
     are not covers of $\cP$. We prove some properties of tightest embeddings.

     \begin{claim}\label{cl:6-1} 
      If $x \prec y$ is an non-tight cover in a tightest induced embedding of a $\DL$-pair $(G,L)$, 
      then for every vertex $z$ of $G$ either $z \leq x$ or $y \leq z$.
     \end{claim}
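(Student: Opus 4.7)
The proof is by contradiction: suppose some $z \in V(G)$ satisfies $z \not\leq x$ and $y \not\leq z$. I will produce a chain cover $\cC'$ of $J_L$ whose induced embedding has strictly fewer non-tight covers than the cover $\cC$ underlying the given tightest embedding, contradicting minimality.

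By Theorem \ref{thm:embedding} and Corollary \ref{cor:tight}, the tightest induced embedding corresponds to a chain cover $\cC$ of $J_L$ with $\redcomp{A} \subseteq \UC$. Using $L \iso \cD(J_L)$, write $D_y = D_x \cup \{p\}$; the non-tightness of $x \prec y$ means $p$ lies in $k \geq 2$ chains $C_{i_1},\dots,C_{i_k}$ of $\cC$. First I refine the witness. If $p \notin D_z$, then $p \notin D_{z \vee x}$, so $z \vee x$ still satisfies $z \vee x > x$ and $y \not\leq z \vee x$; then some cover $t$ of $x$ in $L$ satisfies $t \leq z \vee x$, with $D_t = D_x \cup \{s\}$ for some $s \neq p$. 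Dually, if $p \in D_z$, a cover of $y$ below $y$ and distinct from $x$ is obtained by removing an element of $D_x \setminus D_z$ maximal in $D_y$ from $D_y$; by the evident $x \leftrightarrow y$, $\leq \leftrightarrow \geq$ symmetry of the statement, I handle only the first case. Since $p$ and $s$ are both minimal in $J_L \setminus D_x$, they are incomparable in $J_L$, and so lie in disjoint chains of $\cC$. The sublattice $\{x, y, t, y \vee t\}$ of $L$, with $D_{y \vee t} = D_x \cup \{p, s\}$, is a four-element diamond; since each pair of these four downsets has symmetric difference contained in $\{p\}$, $\{s\}$, or $\{p,s\}$ -- and in the last case the only non-loop arcs of $J_L$ with both endpoints in $\{p,s\}$ would require $p$ and $s$ comparable -- all six pairs are adjacent in $G(J_L,A) \iso G$, so $\{x, y, t, y \vee t\}$ is a $4$-clique in $G$.

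I then construct $\cC'$ by deleting $p$ from $C_{i_2}$. The result is a chain cover of $J_L$ (since $p$ remains in $C_{i_1}$), and $p$'s multiplicity drops by one, so the number of non-tight covers of the form $D \prec D \cup \{p\}$ strictly decreases while no new non-tight covers are created. By Corollary \ref{cor:tight}, the embedding $e_{\cC'}$ is induced iff $\redcomp{A} \subseteq \UC'$. Arcs of $\redcomp{A}$ not incident to $p$ are unaffected, so the verification reduces to showing that every arc $(q,p)$ or $(p,r)$ of $\redcomp{A}$ whose only chain-witness in $\cC$ was $C_{i_2}$ is in fact witnessed by another chain of $\cC$. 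Here the refined witness $t$, the $4$-clique structure, and $R$-thinness combine: were such an arc witnessed only by $C_{i_2}$, I would construct from the data $D_x, D_y, D_t, D_{y \vee t}$ and the arc's endpoint a pair of distinct downsets whose $\comp{A}$-constraints (via Lemma \ref{lem:altGPA}) coincide, so whose $G$-neighbourhoods coincide, contradicting $R$-thinness.

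The main obstacle is this final verification. Without $R$-thinness, the chain $C_{i_2}$ could genuinely be the only witness of some arc of $\redcomp{A}$ at $p$, and no tightening would be possible. Producing the required pair of twin downsets -- parametrised by the location of the offending arc relative to $s$ and to the chains of $\cC$, and crucially using the $4$-clique structure on $\{x,y,t,y\vee t\}$ to ensure the pair lies in $V(G)$ -- is the technical core of the argument, and is where the three hypotheses (tightness of $\cC$, existence of the witness $z$, and $R$-thinness of $G$) must combine.
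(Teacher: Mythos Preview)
Your approach is entirely different from the paper's, and it has genuine gaps. The paper argues directly in the product $\cP$ using only the cover relation and the lattice operations: it rules out $x < z < y$ trivially, then for $x < z$ with $z \incomp y$ it examines $z \wedge y$ to obtain a contradiction with $x \prec y$, handles $z < y$ with $x \incomp z$ dually, and reduces the doubly-incomparable case to these via $z' = x \vee z$. No chain covers, no $A$, no $R$-thinness --- the whole thing is four lines of order theory.

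Your proposal instead tries to contradict minimality by building a strictly tighter induced chain cover $\cC'$. Two things go wrong. First, dropping the multiplicity of $p$ from $k$ to $k-1$ does \emph{not} reduce the number of non-tight covers when $k \geq 3$: a cover $D \prec D \cup \{p\}$ is non-tight as soon as $p$ lies in at least two chains, so every such cover remains non-tight after your deletion, and no new tight covers appear. The quantity being minimised is the count of non-tight covers, not total chain multiplicity, so you have not exhibited a tighter embedding unless $k=2$. Second, you explicitly do not carry out what you yourself call the ``technical core'': verifying $\redcomp{A} \subseteq \UC'$. You only sketch that a failure would force $R$-twins, but $R$-thinness is not a hypothesis of the claim, and even granting it you give no construction of the twin downsets --- an arc of $\redcomp{A}$ incident to $p$ whose only witness is $C_{i_2}$ need have no relation whatsoever to your auxiliary element $s$ or to the $4$-clique $\{x,y,t,y\vee t\}$, so it is unclear what that clique is supposed to buy you.
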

     \begin{proof}
       Indeed, $x < z < y$ is impossible as $x \prec y$. If $x < z$ but $z \incomp y$ (i.e., $z$ and $y$
       are incomparible) then $z_i > y_i$ for some $i$ and so $x_i < (z \wedge y)_i$ giving us that 
       $x < z \wedge y < y$, which is again impossible as $x \prec y$. 
       Similarily $x \incomp z$ and $x < y$ is impossible. Finally, if $x \incomp z$ and 
       $y \incomp z$ then taking $z' = x \vee z$ we get that $x < z'$ and $z' \incomp y$, 
       which we have already seen is impossible. 
     \end{proof}     
     
      \begin{claim}\label{cl:6-2} 
       If $x \prec y$ is an non-tight cover  in a tightest induced embedding of a $\DL$-pair $(G,L)$,
       then for all $i \in [d]$, $x_i$ and $y_i$ have different neighbourhoods in $G_i$. 
     \end{claim}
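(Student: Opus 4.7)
My plan is a proof by contradiction: I assume that $N_{G_i}(x_i) = N_{G_i}(y_i)$ for some $i \in [d]$ and construct an induced chain cover $\cC'$ of $J_L$ whose embedding has strictly fewer non-tight covers than $\cC$, contradicting the tightness of $\cC$. Using Theorem \ref{thm:char2} I identify $G = G(J_L, A)$ for $A$ satisfying \eqref{id:minmaxd} and $L = \cD(J_L)$, and by Corollary \ref{cor:tight} the embedding $e_\cC$ is induced iff $\redcomp{A} \subseteq \UC$. The cover $x \prec y$ in $L$ is the adjunction of a single element $p = y \setminus x \in J_L$ to the downset $x$, so $y_i - x_i \in \{0, 1\}$ for every $i$, with $y_i = x_i + 1$ iff $p \in C_i$; non-tightness means $p$ lies in at least two chains of $\cC$.

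In the main case $x_i < y_i$, we have $y_i = x_i + 1$ and $p = c_{i, y_i}$. Using the adjacency rule $\alpha \sim \beta \iff c_{i, \alpha + 1} \to c_{i, \beta}$ in $A$ of $G_i = G(C_i, A_i)$ for $\alpha \le \beta$ (obtained from the definition of $G(\cdot, \cdot)$ together with \eqref{id:minmaxd}), the hypothesis $N_{G_i}(x_i) = N_{G_i}(y_i)$ unpacks to a \emph{redundancy} of $p$ in $C_i$: $(c_{i, \alpha + 1}, c_{i, x_i}) \in A \iff (c_{i, \alpha + 1}, p) \in A$ for every $\alpha < x_i$, and $(p, c_{i, \beta}) \in A \iff (c_{i, y_i + 1}, c_{i, \beta}) \in A$ for every $\beta > y_i$. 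Specialising to $\alpha = x_i - 1$ and $\beta = y_i + 1$, where the loops are always in $A$, forces the chain-arcs $(c_{i, x_i}, p)$ and $(p, c_{i, y_i + 1})$ to lie in $A$.

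Now define $\cC' = (\cC \setminus \{C_i\}) \cup \{C_i \setminus \{p\}\}$. Since $p$ belongs to a chain other than $C_i$, $\cC'$ still covers $J_L$, and the only chain-arcs of $\UC$ possibly absent from $\UC'$ are $(c_{i, x_i}, p)$ and $(p, c_{i, y_i + 1})$; both lie in $A$ and therefore in neither $\comp{A}$ nor $\redcomp{A}$, so $\redcomp{A} \subseteq \UC'$ and $e_{\cC'}$ is induced by Corollary \ref{cor:tight}. In $\cC'$ the element $p$ lies in $k - 1$ chains, where $k \ge 2$ is its multiplicity in $\cC$, so every cover of the form $D \prec D \cup \{p\}$ in $L$ now has its image differing in $k - 1$ coordinates. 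When $k = 2$ every such cover (in particular $x \prec y$) becomes tight in $\cC'$, strictly reducing the non-tight count and contradicting the tightness of $\cC$.

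The main obstacle is the residual cases $x_i = y_i$ (where $p \notin C_i$ so nothing can be removed from $C_i$) and $k \ge 3$ (where one removal preserves the non-tight count). My plan is to handle both by iterating the removal: at each step, some chain $C_j$ still containing $p$ should admit the analogous redundancy condition, so that $p$ can be removed from $C_j$ without affecting inducedness; repeating until $p$ lies in only one chain yields the desired strict reduction. Propagating the redundancy condition through the iteration -- equivalently, showing that once the tightest embedding has $N_{G_i}(x_i) = N_{G_i}(y_i)$ for one $i$ this can be parlayed into the same property for some other chain still containing $p$ -- is the technical crux of the proof.
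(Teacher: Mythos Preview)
Your approach is workable in the base case but substantially more elaborate than the paper's, and the part you flag as the ``technical crux'' is a genuine gap you have not filled.

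The paper's proof bypasses chain covers and Corollary~\ref{cor:tight} entirely. It argues directly on the product: since $x_i$ and $y_i$ are twins in $G_i$, replace $G_i$ by the proper interval graph obtained by identifying $x_i$ with $y_i$, and subtract $1$ from the $i$-th coordinate of every vertex in $\pfi{y}$ (Claim~\ref{cl:6-1} guarantees $V(G)=\pid{x}\cup\pfi{y}$, so this is a well-defined injection into the new product). Because $N_{G_i}(x_i)=N_{G_i}(y_i)$, adjacency in the $i$-th factor is unchanged, so the new embedding is still induced, and the cover $x\prec y$ now differs in one fewer coordinate. That is the whole argument; no analysis of $A$, $\redcomp{A}$, or $\UC$ is needed.

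Your proposed iteration for $k\ge 3$ does not go through as stated. The hypothesis $N_{G_i}(x_i)=N_{G_i}(y_i)$ is a statement about $A$ restricted to $C_i$; it says nothing about $A$ restricted to any other $C_j$ containing $p$, so after deleting $p$ from $C_i$ there is no reason the redundancy condition should hold for a surviving chain. (In the paper's direct approach the same $k\ge3$ issue is present if one reads ``tightest'' literally as minimising the \emph{count} of non-tight covers, but the contraction visibly decreases $\sum_{c}|y_c-x_c|$ over all covers, so a secondary minimisation---or a routine minimal-counterexample argument---closes it immediately. In your chain-cover formulation that fix is less transparent, because you must also re-verify $\redcomp{A}\subseteq\UC'$ at each step.)

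Finally, the case $x_i=y_i$ is a red herring: then $N_{G_i}(x_i)=N_{G_i}(y_i)$ trivially, so the claim as literally stated is false for such $i$. Both the paper's proof (contracting $x_i$ and $y_i$ is vacuous when they coincide) and its only application (Lemma~\ref{lem:rthin}, where one takes $x_i=a$, $y_i=a+1$) concern only coordinates with $x_i\neq y_i$; you should simply read the claim that way rather than try to iterate toward it.

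A minor point: when you conclude $\redcomp{A}\subseteq\UC'$, you only check the two \emph{covering} arcs $(c_{i,x_i},p)$ and $(p,c_{i,y_i+1})$, but $\UC$ contains all within-chain comparabilities, so you must also rule out longer arcs $(c_{i,\gamma},p)$ and $(p,c_{i,\gamma})$ lying in $\redcomp{A}$. Your derived equivalences actually give this (any such arc in $\comp{A}$ is non-minimal, hence not in $\redcomp{A}$), but you should say so.
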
    
     \begin{proof}
       Towards contradiction, assume that $x_i$ and $y_i$ have the same  neighbourhoods,
       then replacing $G_i$ with the  proper interval graph we get by contracting $x_i$ and $y_i$
       into a point, and reducing the $i$-coordinate of every vertex in $\pfi{y}$ by one, we get
       a tighter embedding of $(G,L)$, contradicting the fact that we started with a tigthest 
       embedding.  
     \end{proof}

    \begin{lemma}\label{lem:rthin}
     If $G$ is $R$-thin then each $G_i$ in a tightest induced embedding of $(G,L)$ is $R$-thin. 
    \end{lemma}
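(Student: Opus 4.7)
The plan is to argue by contradiction, extending the proof idea of Claim \ref{cl:6-2} from a specific non-tight cover to an arbitrary pair of vertices of $G_i$ sharing a neighborhood. Suppose some $G_i$ in a tightest induced embedding is not $R$-thin, so there are distinct $a, b \in V(G_i)$ with $N_{G_i}(a) = N_{G_i}(b)$. A first reduction: in a proper interval graph in min-max form each closed neighborhood is an interval whose endpoints are monotone in the vertex label, so $N(a) = N(b)$ with $a < b$ forces $N(c) = N(a)$ for every $a \leq c \leq b$; in particular we may take $b = a + 1$.

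The key observation is: for every $x \in L$ with $x_i = a$, if $y := x + e_i$ also lies in $L$ (where $e_i$ is the $i$th standard basis vector), then $N_G(x) = N_G(y)$. Indeed $G$ sits in $\cG = \prod_j G_j$ as an induced subgraph with coordinatewise adjacency, $x$ and $y$ agree outside coordinate $i$, and $a, b$ have the same $G_i$-neighborhood, contradicting the $R$-thinness of $G$. Hence $x + e_i \notin L$ for every $x \in L$ with $x_i = a$. Next I would contract: identify $a$ and $b$ in $G_i$ to obtain a proper interval graph $G_i'$ with $|V(G_i)| - 1$ vertices (still in min-max form after renumbering), let $\cP' = P_i' \times \prod_{j \neq i} P_j$, and let $\phi \colon \cP \to \cP'$ decrement coordinate $i$ by $1$ on values $\geq b$ and act as the identity otherwise. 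The previous step makes $\phi$ injective on $L$, and $\phi$ is a coordinatewise monotone map between products of chains, hence a lattice homomorphism, so $\phi(L)$ is a sublattice of $\cP'$ and equals $\cP' - \cV'$ for some union $\cV'$ of vertex intervals by \cite{Ri74}. Because $a, b$ share a $G_i$-neighborhood, adjacency in $\cG' = G_i' \times \prod_{j \neq i} G_j$ matches that in $\cG$ under $\phi$, so $G$ is still the induced subgraph on $\phi(L)$: we get a second induced embedding of $(G,L)$.

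To finish, I would compare non-tight cover counts. Covers of $L$ correspond bijectively via $\phi$ to covers of $\phi(L)$, and the $L^1$-distance drops by exactly $1$ on covers that straddle the merged pair (i.e.\ $\min(x_i, y_i) \leq a < b \leq \max(x_i, y_i)$) and is unchanged otherwise; tightness is $L^1$-distance $1$, and the key step rules out any straddling tight cover. Thus contraction creates no new non-tight covers and converts every straddling cover of distance exactly $2$ to a tight one. The main obstacle is guaranteeing the existence of at least one straddling cover of distance $2$. I would consider $D = \pid{\ce{a}{i}} \in L$ and the cover $D \prec D \cup \{\ce{(a+1)}{i}\}$ in $L$ (valid when $\ce{(a+1)}{i}$ is minimal in $J_L - D$; otherwise a similar argument is run on another minimal element): this cover straddles, and its $L^1$-distance equals the number of chains of $\cC$ containing $\ce{(a+1)}{i}$, which is at least $2$ by the key step. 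When this count equals $2$ the strict decrease is immediate; when it is larger, one refines ``tightest'' to lexicographically also minimize $\sum_k n_k$, under which contraction strictly decreases the secondary measure without increasing the primary, still producing the contradiction.
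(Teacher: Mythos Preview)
Your argument through the ``key observation'' is correct and matches the paper, but you then take a longer route than needed. The paper finishes in one step by invoking Claim~\ref{cl:6-2} directly: since any induced embedding here is $e_\cC$ for a chain cover $\cC$, every cover $D\prec D'=D\cup\{p\}$ of $L=\cD(J_L)$ changes each coordinate by $0$ or $1$, so any cover $x\prec y$ with $x_i\le a<y_i$ must have $x_i=a$ and $y_i=a+1$. Such a straddling cover exists (the projection of $L$ to $P_i$ is onto), and by Claim~\ref{cl:6-2} it cannot be non-tight, hence it is tight, giving $x+e_i\in L$ with $x_i=a$ --- which is precisely what your key observation forbids. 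That is the entire proof.

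Your contraction route, by contrast, has a genuine gap in the final step. As you compute, every straddling cover has $L^1$-distance equal to the number $m$ of chains of $\cC$ containing $\ce{(a+1)}{i}$, and the key observation forces $m\ge 2$. When $m\ge 3$, your contraction drops each straddling distance from $m$ to $m-1\ge 2$, so every non-tight cover remains non-tight and the count is unchanged: there is no contradiction with the paper's definition of ``tightest'' (minimum number of non-tight covers). Your proposed remedy --- refining ``tightest'' lexicographically by $\sum_k n_k$ --- would at best show that each $G_i$ is $R$-thin in one particular tightest embedding, not in every tightest induced embedding as the lemma asserts, so it does not prove the stated result. A smaller issue: your choice $D=\pid{\ce{a}{i}}$ need not make $D\cup\{\ce{(a+1)}{i}\}$ a downset; the correct witness is $D=\pid{\ce{(a+1)}{i}}\setminus\{\ce{(a+1)}{i}\}$, and your ``otherwise'' branch on a different minimal element would not produce a straddling cover at all.
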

    \begin{proof}
      Towards contradiction, assume that some $G_i$ contains vertices $a$ and $b$ with the
      same neighbourhoods.  As $G_i$ is a proper interval graph, we may assume that 
      $b = a + 1$. 
      By Claim \ref{cl:6-2} no non-tight cover projects onto $a \prec a+1$ so there is some 
      $x \in \cG$ with $x_i = a$ such that $x$ and $x + e_i$ are both in $G$.
      But these have the same  neighbourhoods in $\cG$, and so as $G$ is an induced 
      subgraph, they have the same neighbourhood in $G$, contradicting the fact that 
      $G$ is $R$-thin.
    \end{proof}

   \subsection{Removing dispensible edges to get the subgraph $S$}

   The following definition can also be found in \cite{HIK}.

    \begin{definition}\label{def:disp}
    An edge $x \sim y$ of $G$ {\em dispensable} if it satisfies the following conditions. 
    \begin{enumerate}
     \item $\exists z$ such that $N(x) \subsetneq N(z) \subsetneq N(y)$, or
     \item $\exists z$ such that $N(y) \subsetneq N(z) \subsetneq N(x)$, or
     \item $\exists z$ such that $N(x) \cap N(y)  \subsetneq N(x) \cap N(z)$ and $N(x) \cap N(y) \subsetneq N(y) \cap N(z)$.
    \end{enumerate}   
    Observe that when $G$ is $R$-thin, we can replace the $\subsetneq$ in the first two conditions
    with $\subset$; they are equivalent.
  \end{definition}

    In the proof of the following lemma we will assume an embedding of $(G,L)$ into some
    product $\cG = \prod G_i$ of proper inteval graphs $G_i$. We will ust the following  
    notation. For a vertex  $v_i$ of $G_i$ we let $v_i^+ = \max\{ N_{G_i}(v_i) \}$ be the
    greatest
    neighbour of $v_i$ in $G_i$ and $v_i^- = \min\{ N_{G_i}(v_i) \}$ be its least neighbour.
    As $G_i$ is proper interval $v_i \leq u_i$ implies that $v_i^+ \leq u_i^+$ and     
    $v_i^- \leq u_i^-$. As $G_i$ is $R$-thin, strict inequality $v_i < u_i$ implies 
    strict inequality in at least one of $v_i^+ \leq u_i^+$ and     
    $v_i^- \leq u_i^-$.

    \begin{lemma}\label{lem:HHasse}
       Let $(G,L)$ be a $\DL$-pair, $G$ be $R$-thin, and $S$ be the graph we get from $G$
       by removing all dispensable edges.
       Then 
       \begin{enumerate}
        \item[(a)] Every edge of $S$ is between comparable vertices of $L$, and
        \item[(b)] $S$ contains the Hasse graph $H(L)$ of $L$.
        \end{enumerate}
     \end{lemma}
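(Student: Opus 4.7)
The plan is to work in a tightest induced embedding of $(G,L)$ into $\cG = \prod G_i$, which exists by Theorem \ref{thm:embedding}, and in which each $G_i$ is an $R$-thin proper interval graph by Lemma \ref{lem:rthin}. I will use the $v_i^\pm$ notation introduced above, together with the key fact that in $R$-thin $G_i$, strict chain inequality $v_i < u_i$ forces strict inequality in at least one of $v_i^- \leq u_i^-$ or $v_i^+ \leq u_i^+$.

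For part (a), let $x \sim y$ be an edge with $x$ and $y$ incomparable in $L$, and set $v = x \vee y$, $u = x \wedge y$. Compatibility immediately yields
\[ N(x) \cap N(y) \subseteq N(x) \cap N(v) \quad \text{and} \quad N(x) \cap N(y) \subseteq N(x) \cap N(u), \]
since any common neighbour $w$ of $x,y$ satisfies $w = w \vee w \sim x \vee y = v$ and similarly $w \sim u$. To show $xy$ is dispensable I exhibit a witness of the form $w$ equal to $x$ (or $y$) with a single coordinate modified to $w_k^{\pm}$, for a coordinate $k$ in which $x_k \neq y_k$; $R$-thinness of $G_k$ guarantees such a modification lies in $N(x) \cap N(z) \setminus N(y)$ for an appropriate $z \in \{u,v\}$, and symmetrically gives a witness for $N(y) \cap N(z) \setminus N(x)$, yielding dispensability via condition (3) of Definition \ref{def:disp}. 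When no single $z \in \{u, v\}$ serves for both strict containments of condition (3), the neighbourhoods nest and I fall back on condition (1) or (2) with $z = u$ or $z = v$; in exactly those cases coordinate-wise comparison shows $N(x) \subsetneq N(u) \subsetneq N(y)$ or the symmetric chain.

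For part (b), suppose towards contradiction that a Hasse edge $x \prec y$ is dispensable. In condition (1), a witness $z$ satisfies $x \sim z$ (since $x \in N(x) \subsetneq N(z)$) and $z \sim y$ (since $z \in N(z) \subsetneq N(y)$). Consider $z' := (x \vee z) \wedge y$, which by distributivity equals $x \vee (y \wedge z)$ and satisfies $x \leq z' \leq y$, so the cover forces $z' \in \{x, y\}$. If $z' = y$ then $y = x \vee (y \wedge z)$; combining this with $z \sim y$ and compatibility will force $N(z) = N(y)$, contradicting $N(z) \subsetneq N(y)$ together with $R$-thinness. The symmetric analysis when $z' = x$ forces $N(x) = N(z)$. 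Condition (2) is symmetric to (1). For condition (3), the two witnesses $w$ and $w'$ of the strict containments $N(x) \cap N(y) \subsetneq N(x) \cap N(z)$ and $N(x) \cap N(y) \subsetneq N(y) \cap N(z)$ combine with the cover and the lattice operations to produce a lattice element strictly between $x$ and $y$, again contradicting the cover.

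The main obstacle is the coordinate-wise case analysis in part (a): in each coordinate $k$ where $x_k \neq y_k$ the strict $R$-thin direction in $G_k$ may be the upper or the lower one, and a uniform argument must choose the dispensability condition and the witness $z$ compatibly across all such coordinates. Once this is sorted, part (b) is a direct squeeze using the cover property, distributivity, and $R$-thinness.
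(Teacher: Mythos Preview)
Your plan inverts the paper's strategy: you reach for the coordinate embedding in part (a), where the paper uses only lattice operations, and you try to get by with lattice operations alone in part (b), where the paper needs the full coordinate machinery. Both halves have genuine problems.

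For part (a), the paper's argument is much shorter than yours: for incomparable $x \sim y$ one takes $z \in \{x \wedge y, x \vee y\}$, observes via the polymorphism that $N(x) \cap N(y) \subseteq N(z)$, and uses $R$-thinness of $G$ (not of the $G_i$) to get the required proper containment. No coordinates are touched. Your coordinate approach, besides being more work, has a structural issue you do not address: the witness vertex you build by modifying one coordinate of $x$ to $w_k^\pm$ is a vertex of $\cG$, but $G = \cG \setminus \cV$, and nothing in your outline guarantees that this modified vertex survives the removal of the vertex intervals. This is exactly the kind of obstruction that makes the induced-subgraph setting delicate.

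For part (b), the gap is more serious. Your key step for condition (i) is the assertion that $z' = (x \vee z)\wedge y = y$ (equivalently $y \leq x \vee z$) together with $z \sim y$ ``forces $N(z) = N(y)$''. I do not see how to derive this from compatibility and distributivity alone: given $w \in N(y)\setminus N(z)$, none of the available identities \eqref{id:poly}, \eqref{id:minmax}, \eqref{id:vee} let you manufacture an adjacency $w \sim z$ from $w \sim y$, $x \sim z$, and $y \leq x \vee z$. The symmetric claim in the case $z' = x$, and the vague ``produce a lattice element strictly between $x$ and $y$'' for condition (iii), are equally unjustified. The difficulty is real: once $G$ is obtained from $\cG$ by removing vertex intervals, the neighbourhoods $N_G(\cdot)$ are intersections with $V(G)$ and can nest in ways impossible in $\cG$ itself. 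The paper handles this by a careful coordinate argument in a tightest embedding: for a tight cover it pins down the index where $x$ and $y$ differ, tracks $x_1^\pm, y_1^\pm, z_1^\pm$, constructs an auxiliary vertex $w'$, shows it must lie in a removed interval $\nVBs$, and finally contradicts $R$-thinness by exhibiting two vertices of $G$ with equal neighbourhoods. The non-tight cover case uses Claim~\ref{cl:6-1} to force every other $z$ below $x$ or above $y$. None of this is recoverable from your outline.
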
   
     \begin{proof}
       First we prove part (a),  by showing that any edge
       $xy$ between incomparable vertices $x$ and $y$ is dispensable.
       Indeed, as $x$ and $y$ are incomparable, we have that $x \wedge y$ and $x \vee y$
        are distinct and different from $x$ and $y$.
        Further as $\wedge$ and $\vee$ are polymorphisms, any common neighbour of $x$
        and $y$ is a neighbour of both of $x \wedge y$ and $x \vee y$, so 
       $N(x) \cap N(y) \subseteq N(x \wedge y), N(x \vee y)$.  By $R$-thinness, 
       $N(x \wedge y)$ and $N(x \vee y)$ are distinct, so one of them properly contains
       $ N(x) \cap N(y)$. Thus $xy$ is dispensable. 

       Now, part (b) is harder. Let $x \prec y$ be a cover of $L$; we show that it is not
       dispensable. Assume some tightest induced embedding of $(G,L)$ into a product 
       $\cG$ of proper interval graphs.  We have two cases.\\
      
    \n   {\bf Case: $x \prec y$ is a not tight cover.} For any $z$ in $L \setminus \{x,y\}$, we may
       assume by Claim \ref{cl:6-1} that $x \prec y < z$. 
       We show that item i. of Definition \ref{def:disp} cannot hold. Any neighbour of $y$ in 
       $\pfi{y}$ is a neighbour of $z$ by \eqref{id:vee} (of Section \ref{sec:def}) and so
       any $w$ in $N(y) \setminus N(z)$ must be in $\pid{x}$. But then by \eqref{id:minmax} it
       is adjacent to $x$, contradicting that $N(x) \subset N(z)$.
       Items ii. and iii.  cannot hold, as by \eqref{id:minmax} any common neighbour of $x$
       and $z$ is also a neighbour of $y$. \\

   \n   {\bf Case: $x \prec y$ is a tight cover.} 
      By the $R$-thinness of $G$, we may assume without loss of generality that there is some
      $v \in N(y) \setminus N(x)$.  So immediately, condition (ii) of Definition \ref{def:disp}
      does not hold. 
      We may assume, by permuting indices of the interval graphs $G_i$, and possibly reversing 
      the ordering on the first one, that $y_1 = x_1 + 1$  
        
        Assume that (i) holds, that is, that there is some $z$ with 
        $N(x) \subsetneq N(z) \subsetneq N(y)$. As $z$ has some neighbour in $N(y) \setminus N(x)$
        we get that $x_1 < y_1 \leq z_1$, so in particular $y_1^+ < z_1^+$.  As $x \sim y \sim z$
        we have $z_1^- \leq y_1 \leq x_1^+$.  
        There is some vertex $w$ adjacent to $y$ but neither $z$ nor $x$. As it is in 
        $N(y) \setminus N(x)$ we have $x_1^+ < w_1 < y_1^+$. Putting these together we have
         \[  z_1^- \leq y_1 \leq x_1^+ < w_1 \leq y_1 < z_1^+\]
        and so $w \not\sim z$ means we may assume that $w_2 \not\sim z_2$, and
        so that $z_2 < y_2 \leq z_2^+ < w_2$.

        Now let $w'$ be the vertex in $\cG$ we get from $x$ by replacing $x_1$ with $x_1^+$
        and $x_2$ with $z_2^+ + 1$.  So $w' \not\sim z$ in $\cG$, while  $w \sim x$ and 
        $x_2 = y_2 < w'_2 \leq w_2$ implies that $x_2 \sim w'_2$, so $w' \sim x$ in $\cG$.
        As $N(x) \subset N(z)$ in $G$, $w'$ cannot be in $G$, so is in some vertex interval
        $\nVBs$ removed from $\cG$ to get $G$.  

        As $w$ is not in $\nVBs$ we have that $j = 1$ and $x_1^+ \leq \beta < w_1$.
        Also,  as $x$ is not in $\nVBs$, we have that $i = 1$ or $2$. But a tightest embedding
        cannot have a vertex interval of the form $\nVB{\alpha}i{\beta}i$ removed, so
        $i = 2$ and $x_2 < \alpha \leq w'_2 = z_2^+ + 1$.  So $z_2 < x_2 < \alpha \leq z_2^+ + 1$.

 
        Now we claim that the vertex $x'$ which we get from $x$ by reducing $x_2$ to 
        $z_2$ has the same neighbourhood as $x$ in $G$, a contradiction.
        Indeed $N(x')$ contains $N(x) \cap N(z)$, so contains $N(x)$,  
        and all vertices $v$ of $\cG$ that are adjacent to $x$ but not $x'$ have 
        $ \alpha \leq z_2^+ + 1 \leq v_2$ and $v_1 \leq x_1^+ \leq \beta$ so are in
        $\nVB{\alpha}2{\beta}1$ which has been removed. 
        Thus we have our contradiction, so (i) cannot hold.

        The argument that (ii) cannot hold is essentially the same. 
        Finally, assume that (iii) holds.  Clearly this implies that both 
        $N(x) \setminus N(y)$ and $N(y) \setminus N(x)$ are non-empty, so
        $x_1^+ < y_1^+$ and $x_1^- < y_1^-$.  Moreover, $z$ has a neighbour 
        $a \in N(Y) \setminus N(X)$, so having $a_1 > x_1^+$, and similarly another 
        neighbour $b$ having $b_1 < y_1^-$. But then there is no viable value for $z_1$.  

        This completes the proof of (b) and so of the lemma. 
      \end{proof}

    Compare Lemma \ref{lem:HHasse} to similar statements in \cite[Chap 8]{HIK},
    where they show that $S$ is closely related to what they call the 
    {\em Cartesian skeleton} of a product graph $G$. 
    Our proof is complicated by the fact that $G$ is not a product, but a subgraph of a product. 

    \subsection{Orienting edges of  $S$}

     \begin{algorithm}\label{alg:2}
      Given a graph $G$ and subgraph $S$  with designated vertices $\zero$ and $\unit$,
      let the sets $N_j$ and the graphs $D_j$ for all $j = 0, \dots, \dist(\unit,\zero)$
      be defined as follows. 
      $N_j = \{ v \in G \mid \dist(\unit,v) = j\}$, and $D_j$ is be the subgraph of $S$
      induced by $\cup_{\alpha = 0}^j N_j$. 
      Let $\vec{S}$ be the partial orientation of $S$ we get as follows.  
      For $j = 1, \dots,  \dist(\unit,\zero)$ do the following. For an edge $uv$ of 
      $S$, let $u \to v$ if 
      \begin{enumerate}
            \item  $u \in N_{j-1}$ and $v \in N_j$, or
            \item  $u, v \in N_j$ and any one of the following holds 
      \begin{enumerate}
           \item $N(u) - N(v)$ has a vertex $u' \in D_{j-1}$ such that 
                 $u' > v'$ for all $v' \in N(v) \cap D_{j-1}$. 
                 (We consider $u' > v'$ if there is a directed path in $D_{j-1}$
                  from $u'$ to $v'$.)
           \item $N(v) - N(u)$ has a vertex $v' \in D_{j-1}$ such that
                 $u' > v'$ for all $u' \in N(u) \cap D_{j-1}$. 
           \item $N(v) - N(u)$ has a vertex in $N_j \cup N_{j+1}$ but not in $N_{j-1}$.   
         \end{enumerate}           
      \end{enumerate}        
     If every edge of $S$ is oriented in $\vec{S}$,  and the transitive closure of $\vec{S}$ is
     a lattice $L$, then return $L$, otherwise, return 'NO'.
    \end{algorithm}
   
      This algorithm is clearly polynomial in $n$. 

     \begin{lemma}\label{lem:orient}
       Let $(G,L)$ be a compatible pair; $G$ be $R$-thin; and let $S$ be a subgraph of $G$, 
       containing the Hasse graph $H(L)$ of $L$, such that every edge of $S$ is between
       vertices that are comparible in $L$. 
       Algorithm \ref{alg:2}, applied to $S$, $\unit_L$, and $\zero_L$, returns $L$.  
     \end{lemma}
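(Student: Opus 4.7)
The plan is to argue by induction on $j$ running from $0$ to $\dist(\unit,\zero)$ that after stage $j$ of Algorithm \ref{alg:2} every edge of $S$ with both endpoints in $D_j$ is oriented, and the orientation matches $L$ in the sense that $u \to v$ in $\vec S$ if and only if $u > v$ in $L$. Since $H(L)\subseteq S$ and every edge of $S$ joins comparable vertices of $L$ (by hypothesis), once this is verified the transitive closure of $\vec S$ is forced to coincide with $L$: it contains every cover of $L$, and it cannot introduce extra relations because $\vec S$ is already contained in the order relation of $L$.

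Before starting the induction I would isolate a preparatory distance-monotonicity fact: for any $w \in G$ and any $v \in L$, meeting a shortest $\unit$-to-$w$ walk componentwise with $v$ using the polymorphism $\wedge$ produces a walk from $\unit \wedge v = v$ to $w \wedge v$ of the same length, and joining with $v$ gives a walk from $\unit$ to $w \vee v$. From this one deduces that if $u < v$ are comparable in $L$ and $uv \in S$, then $\dist_G(\unit, v) \le \dist_G(\unit, u)$. Combined with $H(L) \subseteq S \subseteq G$, this handles case (1) of the algorithm: an edge of $S$ between $N_{j-1}$ and $N_j$ must go from the greater to the smaller element of $L$, so orienting $u \to v$ agrees with $u > v$.

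The heart of the proof is case (2). For $u, v \in N_j$ adjacent in $S$ with, say, $u > v$ in $L$, I need to check that at least one of (2a)--(2c) triggers in the direction that orients $u \to v$, and none triggers in the opposite direction. Using compatibility, I would produce a cover $u' \succ v'$ of $L$ lying inside $D_{j-1}$ by meeting/joining $u$ and $v$ with a neighbour on a shortest path from $\unit$; by the inductive hypothesis, the comparison $u' > v'$ is already recorded in $\vec S$, which fuels condition (2a) or (2b). If instead the neighbourhoods distinguishing $u$ from $v$ all lie in $N_j \cup N_{j+1}$, then (2c) fires; the $R$-thinness of $G$ guarantees that some distinguishing neighbour exists.

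The main obstacle is verifying the asymmetry of case (2), namely that (2a)--(2c) never fire simultaneously in both directions for the same edge. This hinges on the antisymmetry of the inductively correct orientation on $D_{j-1}$, together with the monotonicity of $\wedge$ and $\vee$ under adjacency: any distinguishing neighbour of the greater vertex $u$ is forced, via the polymorphism applied with a witness in $D_{j-1}$, to be greater in $L$ than the corresponding neighbour of $v$. Once every edge of $S$ is correctly oriented, the transitive closure of $\vec S$ recovers exactly $L$, so Algorithm \ref{alg:2} returns $L$ as claimed.
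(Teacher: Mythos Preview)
Your overall architecture matches the paper's: induction on $j$, distance monotonicity via the polymorphisms to handle edges between $N_{j-1}$ and $N_j$, and then an analysis of edges inside $N_j$ together with a separate verification that the algorithm never orients an edge the wrong way. That framework is correct.

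The gap is in your treatment of case~(2). Conditions (iia) and (iib) do not merely ask for \emph{some} comparable pair $u',v'$ in $D_{j-1}$; condition (iia) requires a vertex $u' \in (N(u)\setminus N(v))\cap D_{j-1}$ that dominates \emph{every} $v' \in N(v)\cap D_{j-1}$, and (iib) is dual. Your plan to ``produce a cover $u'\succ v'$ by meeting/joining $u$ and $v$ with a neighbour on a shortest path from $\unit$'' does not give this: a single neighbour on a geodesic need not dominate all of $N(v)\cap D_{j-1}$, and there is no reason the pair you construct is a cover, nor that $u'$ lands in $N(u)\setminus N(v)$ rather than just $N(u)$. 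The paper's device here is different and is the key step you are missing: $N(u)$ is a \emph{conservative set} (an intersection of balls), hence closed under $\wedge$ and $\vee$, hence a sublattice with a maximum element $u'$. One then checks (i) that this $u'$ lies in $N_{j-1}$ by joining a shortest $\unit$-walk with $u'$, (ii) that $u'\notin N(v)$ whenever $N(u)\setminus N(v)\neq\emptyset$, and (iii) that $u'\geq v'$ for every $v'\in N(v)\cap D_{j-1}$ via $u'\vee v'\sim u\vee v=u$ and maximality of $u'$. The dual argument, using the minimum of the sublattice $D_{j-1}\cap N(v)$, handles (iib). Without this extremal-element mechanism your witnesses for (iia)/(iib) are not pinned down.

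Similarly, your asymmetry paragraph is too schematic. Ruling out (iic) in the wrong direction is not just ``monotonicity of $\wedge,\vee$ under adjacency''; it again needs that the maximum neighbour of $u$ lies in $N_{j-1}$, which is exactly the conservative-set argument above.
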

      \begin{proof}
         It is enough to show that for any (non-loop) edge $uv$ of $S$ with $u > v$,
         the above algorithm {\em properly orients $uv$}; i.e.,  sets $u \to v$ and at
         the same time does not set $v \to u$.

         Observe that by construction every edge of $S$ is either in $D_j$ for some $j$
         or is between $D_{j-1}$ and $D_{j}$ for some $j$.
         We will prove by induction on $j$
         that the $j^{th}$ step of the algorithm proper orients such edges, yielding 
         a proper orientation of all the edges of $D_j$. Before we do this though,
         we first prove that it will never improperly orient an edge.

         \begin{claim}
           Let $u > v$ then the algorithm will not set $v \to u$.
         \end{claim}
         \begin{proof} \claimproof
           We must check that none of the conditions of the algorithm are satisfied when the roles 
           of $u$ and $v$ are reversed

           To see that item (i) is not satisfied observe that if not both of $u$ and $v$ are in $N_j$,
           then clearly it is $u$ that is closer to $1$. Indeed,  if 
           $v = x_\l \sim x_{\l- 1} \sim \dots \sim x_1 = 1$ is a path in $G$, then so is 
           $u = u \vee x_\l \sim u \vee x_{\l - 1} \sim \dots \sim u \vee x_1 = 1$.
           So $u \in N_{j-1}$ and $v \in N_j$.  ( In fact this shows that the algorithm properly sets
           $u \to v$ in the case that $u$ and $v$ are not both in $N_j$. 

           To see that items (iia) and (iib) are not satisfied, it is enough to observe that if 
           $u' \sim u$ and $v' \sim v$ and $v' \geq u'$ then $u' \sim v$ and $u \sim v'$. 
           But this is clear, as the premises imply that 
              \[ u' = u' \wedge v' \sim u \wedge v = v \]
           and 
             \[ v' = u' \vee v' \sim u \vee v  = u. \]

           To see that item (iic) is not satisfied, assume that there is some 
           $w \in N(u) - N(v)$.  As $N(u)$ is conservative, (recall the definition 
          of conservative sets preceding Lemma \ref{lem:connected}) it induces a sublattice
          of $L$, so has a maximum element $u'$.  This element must also be in $N(u) - N(v)$;
          as if we had $w' \sim v$, then 
           \[ w = w \vee u' \sim u \vee v = v, \]
          contradicting the fact that $w \not\in N(v)$. 
          We now show that $u'$ is in $N_{j-1}$, so item (iic) is not satisfied. 
          Indeed, some neighbour $x$ of $u$ must be in $N_{j-1}$, as $u \in N_j$. 
          Let $x = x_{i-1} \sim x_{i-2} \sim \dots \sim x_0 = \unit$ be a length $i-1$
          walk from  $x$ to $\unit$.  Then taking the join of each element in the 
          walk with $u'$ we get a walk 
              $u' = u' \vee x_{i-1} \sim u' \vee x_{i-2} \dots \sim u' \vee \unit = \unit$
         from $u'$ to $\unit$. This shows that $u'$ is in $N_i$ for some $i \leq j-1$, but
         being a neighbour of $u$, it must be in $N_{j-1}$.  
         \end{proof}

         Now we have just to verify that for $u > v$ the algorithm sets $u \to v$. 
                   
         For the case $j = 1$ let $uv$ be an edge of $S$ in $D_1$ with $u > v$. 
         Item (i) holds if and only if $u = \unit$, and in this case gives $u \to v$, as needed.
         Assume therefore that $u,v \in N_1$. 
         As all vertices in $N_1$ are adjacent to $\unit$,
         items (iia) and (iib) are vacuous, so we must show that (iic) holds.
         To see this, observe that as $u \geq v$, we have that 
         $u_i \geq v_i$ for all $i \in [d]$.
         As $u_i^+ = 1 = v_i^+$ for all $i$, we have by $R$-thinness that
         $N_G(1) \subsetneq N_G(u) \subsetneq N_G(v)$.
         The vertex in $N_G(v) \setminus N_G(u)$ is thus
         in $N_2$ as needed.
  
         Now assume that all edges of $D_{j-1}$ are properly oriented. 
         We show that the $j^{th}$ round of the algorithm properly orients the 
         heretofore unoriented edges of $D_j$.

         Let $uv$ be an edge of $D_j \setminus D_{j-1}$  with $u \geq v$. 
         If not both of $u$ and $v$ are in $N_j$, then (as we showed in the claim) 
         $u \to v$ is properly ordered by step (i) of the algorithm. 

         So we may assume that both of $u$ and $v$ are in $N_j$.
         As $u > v$ we have that for all $i$, $u_i \geq v_i$. 
         By $R$-thinness there is a vertex $w$ in either 
         $N(u) \setminus N(v)$ or in $N(v) \setminus N(u)$.
         
         We show now that in the first case, (iia) is satisfied, and then that in the 
         second case, (iib) or (iic) are satisfied.
         
         \begin{claim}
           If $w \in N(u) \setminus N(v)$, then (iia) is satisfied.
         \end{claim}
         \begin{proof}\claimproof
            Let $w \in N(u) \setminus N(v)$.
            As we showed in the proof that item (iic) is not satisfied in the previous claim, 
            we have that the maximum neighbour $u'$ of $u$ is in  
            $D_{i-1} \cap (N(u) \setminus N(v))$.  
       
            To see that (iia) is satisfied,  we must show that $u' \geq v'$ for 
            any neighbour $v'$ of $v$ in $D_{i-1}$. Indeed, 
            $v' \sim v$ and $u' \sim u$  give 
            $v' \vee u' \sim v \vee u = u$. 
            As $u'$ is the maximal neighbour of $u$ this gives us that 
            $u' \geq v' \vee u'$. This implies however that 
            $u' = v' \vee u'$, and so $u' \geq  v'$, as needed. 

         \end{proof}

        \begin{claim}
          If $w \in N(v) \setminus N(u)$, then (iib) or (iic) are satisfied.
        \end{claim} 
        \begin{proof} \claimproof
          We assume that (iic) does not hold, and then show that (iib) must.
          
          Indeed, if (iic) does not hold, then  
          $w \in D_{j-1} \cap (N(v) - N(u))$. As $D_{j-1} \cap N(v)$ is a conservative
          set it induces a sublattice, so has a minimum element $v'$.
          But then for any neighbour $u'$ of $u$ in $D_{j-1}$ we have
          from $v' \sim v$ and $u' \sim u$, that $v' \wedge u' \sim v \wedge u = v$. 
          As $D_{j-1}$ is conservative, $v' \wedge u'$ is in $D_{j-1}$ so is in $D_{j-1} \cap N(v)$.
          Thus $v' \wedge u' \geq v'$ which implies that $u' \geq v'$. As $v' \not\in N(u)$ 
          we have that $u' > v'$, as needed.
        \end{proof}
        
       This completes the proof of the lemma. 

      \end{proof}

      Now we are ready to prove Theorem \ref{thm:poly}.
  
     \begin{proof}[Proof of Theorem \ref{thm:poly}]
       Let $G$ be an $R$-thin graph. It is shown in \cite{HIK} that the subgraph $S$ we get by 
       removing dispensible edges can be found in polynomial time. 
       For every choice of $\zero$ and $\unit$ in $G$ apply Algorithm \ref{alg:2} to 
       $G,S$ and $\zero$ and $\unit$. As this is at most $n^2$ applications of a polynomial
       time algorithm, it is also polynomial. If for any choice of $\zero$ and $\unit$ a lattice 
       $L$ is returned then by Lemmas \ref{lem:HHasse} and \ref{lem:orient},  then $G$ is in $\DL$ and
       $L$ is a compatible distributive lattice. 
       If 'No' is returned for every choice, then by Lemmas \ref{lem:HHasse} and 
       \ref{lem:orient}, $G$ is not in $\DL$. 
     \end{proof}

      Here is an unexpected consequence of our algorithm.

     \begin{corollary}\label{cor:uniqueType2}
        If $G$ is $R$-thin, then for a given choice of minimum and maximum
        vertices $\zero$ and $\unit$, there is at most one distributive lattice $L$ with
        minimum element $\zero$ and maximum element $\unit$ that is compatible 
        with $G$. 
     \end{corollary}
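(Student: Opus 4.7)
The plan is to extract uniqueness directly from the determinism of Algorithm \ref{alg:2}, which was the main technical tool of the recognition theorem. The key point is that while we used the algorithm to decide \emph{whether} a compatible distributive lattice exists, its output, when it succeeds, is a function only of its inputs $G$, $S$, $\zero$, and $\unit$, and never involves any choice.

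First I would fix $G$ to be $R$-thin with designated vertices $\zero$ and $\unit$, and let $S$ be the subgraph of $G$ obtained by removing all dispensable edges (in the sense of Definition \ref{def:disp}); note that $S$ depends only on $G$, not on any lattice. Now suppose $L$ is any distributive lattice compatible with $G$ that has minimum $\zero$ and maximum $\unit$. By Lemma \ref{lem:HHasse}, applied to the $\DL$-pair $(G,L)$, every edge of $S$ is between vertices that are comparable in $L$, and $S$ contains the Hasse diagram $H(L)$. Therefore the hypotheses of Lemma \ref{lem:orient} are met, so Algorithm \ref{alg:2} applied to $S$ with designated vertices $\unit_L = \unit$ and $\zero_L = \zero$ returns exactly $L$.

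Since the algorithm is deterministic — the sets $N_j$ and $D_j$ are defined purely in terms of graph distances in $G$ and $S$, and the orientation rules (i), (iia), (iib), (iic) make no arbitrary choices — its output is a function of the inputs $G$, $S$, $\zero$, $\unit$. Two distinct compatible distributive lattices $L$ and $L'$ with the same $\zero$ and $\unit$ would both have to equal this single output, forcing $L = L'$. This gives the corollary.

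The only thing to be careful about is a potential logical circularity: Lemma \ref{lem:orient} assumes the existence of a compatible $L$ and concludes that the algorithm reconstructs it, so it cannot be used to manufacture $L$ from thin air — but that is exactly what we want here, since uniqueness only needs to apply when at least one $L$ exists. There is no genuine obstacle; the argument is essentially a one-line observation once Lemmas \ref{lem:HHasse} and \ref{lem:orient} are in hand.
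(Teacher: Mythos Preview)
Your proposal is correct and is exactly the intended argument: the paper states the corollary as an immediate ``unexpected consequence of our algorithm'' without further proof, and what you have written is precisely the one-line observation the paper has in mind --- Lemma~\ref{lem:HHasse} feeds any compatible distributive lattice $L$ with the given $\zero$ and $\unit$ into Lemma~\ref{lem:orient}, which says the deterministic Algorithm~\ref{alg:2} outputs $L$, so $L$ is unique.
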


     With this we can get the following.

  \begin{figure} \setlength{\unitlength}{280bp}
    \begin{center}
    \begin{picture}(1,0.43333335)(0,0)%
    \put(0,0){\includegraphics[width=\unitlength]{./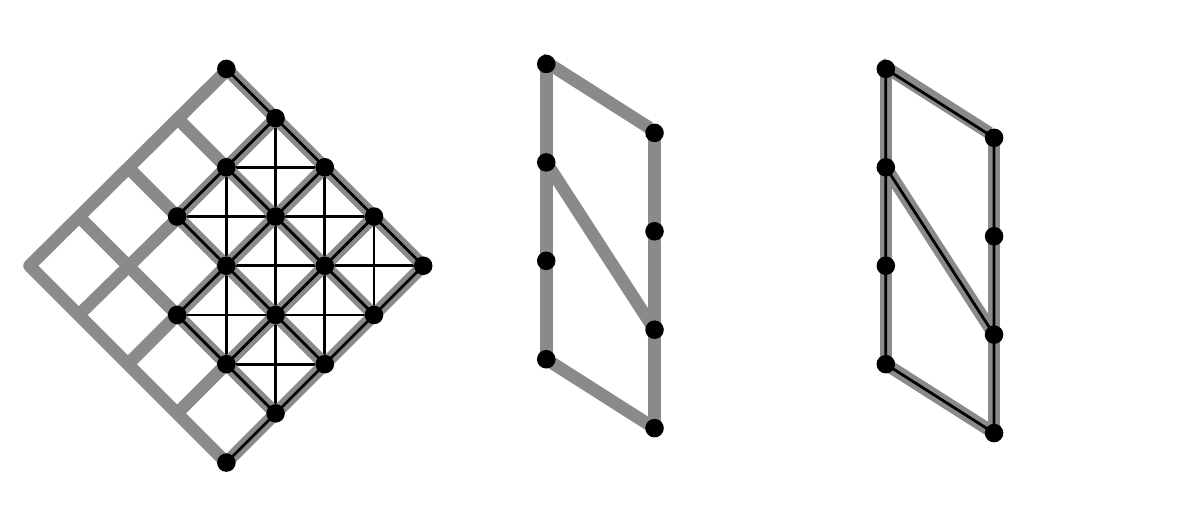}}%
    \put(0.47,0.017){\color[rgb]{0,0,0}\makebox(0,0)[lb]{\smash{$J_L$ }}}%
    \put(0.72500005,0.017){\color[rgb]{0,0,0}\makebox(0,0)[lb]{\smash{$\redcomp{A}$}}}%
    \put(0.21,0.017){\color[rgb]{0,0,0}\makebox(0,0)[lb]{\smash{$(G,L)$}}}%
    \put(0.20833334,0.37500001){\color[rgb]{0,0,0}\makebox(0,0)[lb]{\smash{$\unit$}}}%
    \put(0.16,0.02500002){\color[rgb]{0,0,0}\makebox(0,0)[lb]{\smash{$\zero$}}}%
    \put(0.40,0.12083341){\color[rgb]{0,0,0}\makebox(0,0)[lb]{\smash{$\ce11$}}}%
    \put(0.40,0.21214294){\color[rgb]{0,0,0}\makebox(0,0)[lb]{\smash{$\ce21$}}}%
    \put(0.40,0.29865747){\color[rgb]{0,0,0}\makebox(0,0)[lb]{\smash{$\ce31$}}}%
    \put(0.40,0.38097229){\color[rgb]{0,0,0}\makebox(0,0)[lb]{\smash{$\ce41$}}}%
    \put(0.57246034,0.06148159){\color[rgb]{0,0,0}\makebox(0,0)[lb]{\smash{$\ce12$}}}%
    \put(0.58084656,0.15098546){\color[rgb]{0,0,0}\makebox(0,0)[lb]{\smash{$\ce22$}}}%
    \put(0.57916668,0.23749999){\color[rgb]{0,0,0}\makebox(0,0)[lb]{\smash{$\ce32$}}}%
    \put(0.5774868,0.31981481){\color[rgb]{0,0,0}\makebox(0,0)[lb]{\smash{$\ce42$}}}%
    \end{picture}%
  \caption{Compatible pair $(G,L)$, poset $J_L$, and the graph $\redcomp{A}$}
  \label{fig:3not2}
  \end{center}
  \end{figure}

    \begin{proposition}\label{prop:3not2}
      There are distributive lattice graphs that  are not tight.  
    \end{proposition}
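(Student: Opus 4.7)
My plan is to exhibit the explicit example depicted in Figure \ref{fig:3not2} and argue that no compatible distributive lattice on $V(G)$ yields a tight induced embedding. The figure specifies the graph $G$ together with the distributive lattice $L$, the poset $J_L$ of join irreducibles, and the reduced complement $\redcomp{A}$ of the sub-digraph $A$ of $J_L$ produced by Theorem \ref{thm:char2}.

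The first step is to verify that the displayed pair $(G,L)$ is in fact a $\DL$-pair. I would recover $A$ by closing $\redcomp{A}$ under composition with $J_L$ (using Lemma \ref{lem:extension1} to take its closure under \eqref{id:minmaxd}), form its complement in $J_L$, and then apply Lemma \ref{lem:pographiscompat} to conclude that $\cD(J_L)$ is compatible with $G(J_L,A)$. Theorem \ref{thm:char2} then identifies $G$ with $G(J_L,A)$ and $L$ with $\cD(J_L)$.

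The heart of the argument is then the following: by Corollary \ref{cor:tight}, a chain cover $\cC$ of $J_L$ induces a tight induced embedding of $(G,L)$ into $\cP_\cC$ if and only if $\cC$ is a chain decomposition (its chains are disjoint) and $\redcomp{A} \subseteq \UC$. So it suffices to verify that no partition of $J_L$ into disjoint chains carries every arc of $\redcomp{A}$. By inspection of the figure, $\redcomp{A}$ will contain a vertex with two $\redcomp{A}$-neighbours that are incomparable in $J_L$, so any chain of a decomposition through that vertex can absorb at most one of the incident $\redcomp{A}$-arcs; a short finite case split on the remaining chains of the decomposition then shows some arc of $\redcomp{A}$ must be left uncovered.

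Finally, to upgrade the conclusion from the specific pair $(G,L)$ to all compatible distributive lattices, I would invoke Corollary \ref{cor:uniqueType2}: since $G$ is $R$-thin, any compatible distributive lattice is determined by its choice of $\zero$ and $\unit$. The structural constraints from Proposition \ref{prop:HasseSubgraph} and Example \ref{ex:degOne} restrict these choices to a small finite set (determined by degree-one or near-degree-one vertices of $G$), and for each surviving candidate the resulting $J_L$ and $\redcomp{A}$ retain the same obstructing pattern. The main obstacle is this last combinatorial bookkeeping: cleanly enumerating the possible $(\zero,\unit)$ pairs and checking that in each case no chain decomposition of the corresponding $J_L$ covers the corresponding $\redcomp{A}$.
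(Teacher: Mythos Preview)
Your proposal is correct and follows essentially the same route as the paper: exhibit the pair $(G,L)$ of Figure~\ref{fig:3not2}, note via Corollary~\ref{cor:tight} that a tight induced embedding requires a chain \emph{decomposition} of $J_L$ covering $\redcomp{A}$, and observe that a vertex of $\redcomp{A}$ with out-degree two makes this impossible. The one place you over-engineer is the last step: the graph in Figure~\ref{fig:3not2} has exactly two degree-one vertices, so Example~\ref{ex:degOne} forces these to be $\zero$ and $\unit$ (up to swap), and then Corollary~\ref{cor:uniqueType2} gives a unique compatible distributive lattice---no enumeration of candidate $(\zero,\unit)$ pairs or repeated checking of ``the same obstructing pattern'' is needed.
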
 
    \begin{proof}
      Let $(G,L)$ be the $\DL$-pair shown with a tight but non-induced embedding
      in Figure \ref{fig:3not2}. 
      By Example \ref{ex:degOne}, the shown $\zero$ and $\unit$ are the only 
      possible $\zero$ and $\unit$ for lattice $L$ compatible with $G$. 
      As $G$ is $R$-thin,  we have
      by Corollary \label{cor:uniqueType2} that $L$ is the only distributive 
      lattice (upto isomorphism) compatible with $G$.  
     
     The poset $J_L$ and subdigraph $\redcomp{A}$ are also shown. 
     As $\redcomp{A}$ has a vertex with up-degree two, we have, following
     remarks in Subsection \ref{sub:nottight}, that there is no tight induced
     embedding of $(G,L)$. So $G$ has no compatible distributive lattice with which it has
     a tight induced embedding. 
    \end{proof}



  \subsection{Non $R$-thin graphs}\label{sub:NRThin}

   For a reflexive graph $G$ we define a relation $R$ on the vertex set by letting $uRv$ if 
   $u$ and $v$ have the same neighbourhood. Clearly this is an equivalence relations. 
   The {\em $R$-thin reduction} of a graph $G$ is the graph $G^R$ whose vertices are the sets  
   $R$ and in which two sets are adjacent if there are any (and so all) edges between their
   member vertices.  

   The following shows our algorithm can be useful in showing that a non $R$-thin graph is not
   $\DL$.

   \begin{lemma}\label{lem:rthinred}
      If a reflexive graph $G$ is a $\DL$-graph then its $R$-thin reduction is. 
   \end{lemma}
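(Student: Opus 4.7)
The plan is to use the structural characterisation from Theorem \ref{thm:char2} that writes $G \iso G(J_L, A)$ for some sub-digraph $A$ of $J_L$, and to exhibit $G^R$ in the same form. By Corollary \ref{cor:char2}, showing $G^R \iso G(P', A')$ for some poset $P'$ and sub-digraph $A'$ would then establish that $G^R$ is a $\DL$-graph.

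First I would normalise $A$ using Lemma \ref{lem:extension1} so that $\redcomp{A}$ is the reduced complement, and then define $P'$ to be the sub-poset of $J_L$ induced by those join-irreducibles $p$ that appear as an endpoint of some arc of $\redcomp{A}$, with $A' = A \cap (P' \times P')$. The intuition is that the join-irreducibles outside $P'$ are \emph{invisible}: whether a downset contains them cannot be detected by adjacency in $G$. The ``visible'' skeleton $P'$ should carry exactly the information needed to recover $G^R$.

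The main technical step is the claim that two downsets $D, D' \in \cD(J_L)$ satisfy $D \mathrel{R} D'$ in $G$ if and only if $D \cap P' = D' \cap P'$. Using Lemma \ref{lem:altGPA}, the forward direction should follow by exhibiting, whenever $D$ and $D'$ disagree on some $p \in P'$, a witness neighbour that is adjacent to exactly one of $D$ and $D'$, built from an arc of $\redcomp{A}$ incident to $p$. For the converse, I would check directly that toggling invisible join-irreducibles in a downset never changes its neighbourhood in $G$. I expect the main obstacle to be this converse: the downset constraint forbids arbitrary toggling of invisible elements, so the $R$-class of $D$ must be described carefully as the fibre of $D \mapsto D \cap P'$, and one must verify that every downset of $P'$ actually arises as such an intersection (and conversely that every two downsets in the same fibre differ only on elements whose presence is forced by downset-closure from invisible generators).

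Once the bijection between $R$-classes and $\cD(P')$ is established, Lemma \ref{lem:altGPA} applied inside $P'$ and $A'$ should show that two $R$-classes are adjacent in $G^R$ exactly when the corresponding downsets of $P'$ are adjacent in $G(P', A')$, completing the proof via Corollary \ref{cor:char2}. As a fallback, should the structural approach break down, I would instead try to show directly that $R$ is a lattice congruence on $L$: given $u_1 \mathrel{R} u_2$ and $v \in L$, and $w \sim u_1 \vee v$, one first gets $w \wedge u_1 \sim u_1$ by the $\wedge$ polymorphism, then $w \wedge u_1 \sim u_2$ by $R$, then $(w \wedge u_1) \vee v \sim u_2 \vee v$ by the $\vee$ polymorphism, and then uses distributivity together with the identities \eqref{id:minmax} and \eqref{id:vee} of Fact \ref{fact:ids} to close the gap to $w \sim u_2 \vee v$. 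The quotient $L/R$ would then be a distributive lattice compatible with $G^R$, since meet and join transport through the quotient map.
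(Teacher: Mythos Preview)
Both approaches have a genuine gap, exposed by the same small example. Take $J_L = \{a,b,c\}$ with $a < b$, $a < c$, and $b \parallel c$, and let $A$ consist only of the loops, so that $\comp{A} = \redcomp{A} = \{(a,b),(a,c)\}$ and $P' = \{a,b,c\} = J_L$. Then $\cD(J_L)$ has five elements $\emptyset, \{a\}, \{a,b\}, \{a,c\}, \{a,b,c\}$, and a direct check using Lemma~\ref{lem:altGPA} gives
\[
   N(\{a,b\}) = N(\{a,c\}) = N(\{a,b,c\}) = \{\{a\},\{a,b\},\{a,c\},\{a,b,c\}\}.
\]
So $\{a,b\} \mathrel{R} \{a,c\}$ in $G$, yet $\{a,b\} \cap P' \neq \{a,c\} \cap P'$. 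Your ``forward direction'' fails: even though $b \in P'$ is an endpoint of an arc of $\redcomp{A}$, no downset witnesses a difference in neighbourhoods, because membership of $b$ (a \emph{top} of an $\redcomp{A}$-arc) in $D$ only forces $a \in E$, and that constraint is already forced by $c \in D$. In short, two distinct elements of $P'$ can impose identical constraints on neighbours. Consequently $G(P',A') = G$ here, which is not $R$-thin, so it cannot be $G^R$.

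The fallback also breaks on this example: with $u_1 = \{a,b\}$, $u_2 = \{a,c\}$ (so $u_1 \mathrel{R} u_2$) and $v = \{a,b\}$, we get $u_1 \wedge v = \{a,b\}$ but $u_2 \wedge v = \{a\}$, and these lie in different $R$-classes. So $R$ is \emph{not} a lattice congruence on $L$, and there is no well-defined quotient lattice $L/R$ to put on $G^R$. The gap in your sketched argument is the final step ``close the gap to $w \sim u_2 \vee v$'': distributivity and Fact~\ref{fact:ids} do not bridge $(w \vee v) \wedge (u_1 \vee v) \sim u_2 \vee v$ to $w \sim u_2 \vee v$ in general. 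The paper avoids both pitfalls by not attempting a one-shot description of $G^R$: it takes a tightest induced embedding of $(G,L)$ into $\prod G_i$, locates a factor $G_j$ with two vertices of equal neighbourhood, and contracts that pair in $G_j$. This identifies \emph{some} $R$-related vertices of $G$ and yields a strictly smaller $\DL$-graph; induction finishes. The compatible lattice on $G^R$ produced this way need not be $L/R$ (which may not exist) nor $\cD(P')$.
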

   \begin{proof}
     Assume that $G$ is a reflexive $\DL$-graph that is not $R$-thin.
     We will find pairs of vertices that are identified in $G^R$ and show that when we 
     identify them, or reduce the number of coordinates in which they differ,  we still
     have a $\DL$ graph. The fact that $G^R$ is $\DL$ then follows by induction.  

     For some compatible $L$ assume a tightest induced embedding of $(G,L)$ into $(\cG,\cP)$. 
     Let $x$ and $y$ be vertices of $G$ with the same neighbourhood. By     
     Lemma \ref{lem:rthin} there is some $G_j$ such that $x_j$ and $y_j$ have the same 
     neighbourhood in $G_j$. We may assume that $x_j = y_j - 1$. For any vertex $v$ in 
     $G$ with $v_j \geq y_j$, reduce $v_j$ by $1$. If under this reduction, two vertices
     now have the same co-ordinates, then identity them- they had the same neighbourhood
     so are identified in $G^R$. Clearly there is an embedding of this reduced graph into
     $\cG' = \prod G'_i$ where $G'_i = G_i$ when $i \neq j$ and we get $G'_j$ from $G_j$
     by identifying $x_j$ and $y_j$.  
   \end{proof}

   We conjecture the following.   

   \begin{conjecture}\label{conj:rthin}
    For a reflexive graph $G$ there is a polynomial time algorithm to decide whether
    or not $G$ is a $\DL$-graph. 
   \end{conjecture}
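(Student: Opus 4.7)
My plan is to reduce the problem to the $R$-thin case covered by Theorem \ref{thm:poly}. First, I would compute $G^R$ in polynomial time by bucketing vertices with identical closed neighbourhoods. By Lemma \ref{lem:rthinred}, $G \in \DL$ implies $G^R \in \DL$, so running Algorithm \ref{alg:2} on $G^R$ over all $O(n^2)$ choices of $\zero,\unit$ either certifies $G^R \notin \DL$ (in which case we answer NO) or produces a polynomial-sized list of candidate distributive lattices $L^R$ compatible with $G^R$, with at most one per choice of $\zero,\unit$ by Corollary \ref{cor:uniqueType2}.

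For each candidate pair $(G^R,L^R)$, the remaining task is to decide whether it lifts to a compatible pair $(G,L)$. Combining Theorem \ref{thm:embedding} with Lemma \ref{lem:rthin}, any such lift corresponds to (i) a chain cover $\cC$ of $J_{L^R}$ yielding an induced embedding of $(G^R, L^R)$ into a product $\prod_{i=1}^d G_i^R$ of $R$-thin proper interval graphs, and (ii) a multiplicity function $\mu_i : V(G_i^R) \to \mathbb{Z}_{\geq 1}$ for each factor, specifying that the vertex $u \in G_i^R$ is to be replaced by a sub-chain of length $\mu_i(u)$. Writing $G_i$ for the resulting proper interval blow-up, the rescaled product $\cG = \prod_i G_i$ minus the correspondingly rescaled vertex intervals $\nVBs$ should coincide with $G$. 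The consistency conditions are: for each $v \in V(G^R)$ at coordinates $(v_1,\dots,v_d)$ the size of its $R$-class in $G$ equals $\prod_i \mu_i(v_i)$, and any two vertices of $G^R$ sharing the $i$-th coordinate agree on the corresponding value of $\mu_i$.

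The principal obstacle is step (i). The number of chain covers of $J_{L^R}$ yielding induced embeddings can in principle be exponential in $|J_{L^R}|$, and it is not a priori clear which (if any) supports a valid $\mu$. A natural first attempt is to prove that, whenever a lift exists, one can canonically select $\cC$ from the arcs of $\redcomp{A}$ as in the proof of Theorem \ref{thm:embedding} (taking each arc as a two-element chain and padding with singletons); once $\cC$ is fixed, the multiplicities $\mu_i$ are forced factor by factor by the $R$-class sizes, and the consistency conditions can then be checked in polynomial time. A complementary approach is to bypass $L^R$ entirely and apply Corollary \ref{cor:charPI} directly, combining the polynomial-time product-factorisation algorithm of \cite{FS86} with the proper interval graph recognition algorithm of \cite{CKNOS} to search for a product of proper interval graphs from which $G$ can be recovered by removing a union of vertex intervals. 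Either route will require a structural theorem restricting the candidate search space to polynomial size, and this structural step is where I expect the real difficulty to lie.
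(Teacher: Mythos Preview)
The statement you are addressing is a \emph{conjecture} in the paper, not a theorem; the paper gives no proof, only some remarks on what a resolution would entail. So there is no proof to compare your proposal against, and your own write-up is, appropriately, a sketch of an approach with an honestly flagged gap rather than a claimed proof.

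That said, your outline is very much in the spirit of the paper's own remarks following the conjecture. The paper also reduces to $G^R$ via Lemma~\ref{lem:rthinred}, observes (by reversing the operation in that lemma's proof) that any lift of $(G^R,L^R)$ to $(G,L)$ amounts to ``fattening'' vertices in the factor chains, and argues that fattening in a new dimension can always be replicated in the existing ones. It then recasts the remaining problem as a combinatorial game on a diamond tableau: divide cells of the tableau by lines (your multiplicities $\mu_i$) and find two monotone walks (the boundaries of the removed vertex intervals) so that prescribed counts---the $R$-class sizes---are realised in each cell. The paper reports a polynomial solution for the two-dimensional version of this game in \cite{HPS}, but leaves the general case open.

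Your ``principal obstacle'' is exactly the right place to locate the difficulty. One caution: your appeal to Lemma~\ref{lem:rthin} to justify that every lift arises from blowing up an $R$-thin induced embedding of $(G^R,L^R)$ is not immediate, since that lemma assumes $G$ is $R$-thin to begin with; you would need to argue separately (as the paper hints but does not prove) that a tightest induced embedding of a non-$R$-thin $G$ always descends to one of $G^R$ in this way. Your suggestion to fix $\cC$ canonically from $\redcomp{A}$ is reasonable but does not obviously suffice: different chain covers can yield genuinely different coordinate systems, and the multiplicity constraints may be satisfiable in one and not another. The paper's game formulation sidesteps the choice of $\cC$ by working directly with the product-of-chains picture, which may be the cleaner route.
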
 

   Notice that the graph $G$ in Figure \ref{fig:LnotDL} is not $R$-thin. The vertices $d$ and $e$ have
   the same neighbourhoods. If we remove one of these vertices then the resulting lattice is 
   distributive and is still compatible with the resulting $R$-thin reduction $G^R$. Thus the 
   converse of the above lemma is, unfortunately, not true.
   
   That said, one sees  by reversing the operation in the proof of 
   Lemma \ref{lem:rthinred} that from an embedding of a $\DL$-pair, we can add a copy of
   every vertex that has the same value in some coordinate. Moreover one can argue that 
   'fattening' the lattice in a new dimension can be replicated in the existing dimensions.
    So resolving the conjecture comes down solving a general version of the following game, 
   described vaguely, but clear from Figure~\ref{fig:Game}.

   Given a set of numbers in a diamond tableau, decide if one can  
    \begin{itemize}
      \item divide the regions with square lines, and
      \item make two decreasing walks from the top to the bottom,
    \end{itemize}    
    so that the number of divided regions in each of the original regions between the walks
    equals the number proscribed in the tableau. 
    With some students \cite{HPS}, we show that this game has a polynomial time solution for tableaux of
    two dimentions.

  \begin{figure} 
    \begin{center}
    \begin{picture}(8,6)(0,0)%
    \put(0,.5){\includegraphics[width=8cm]{./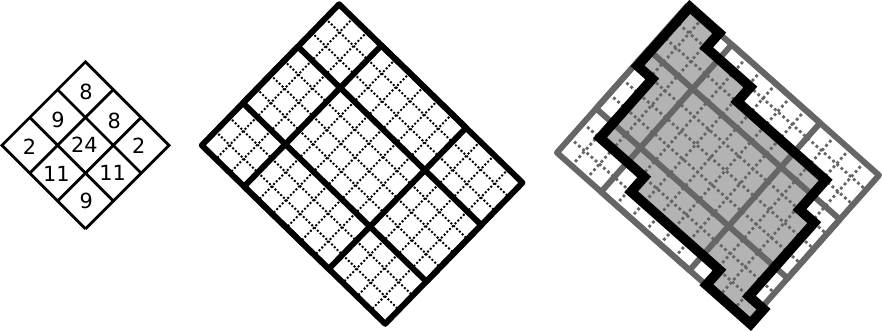}}%
    \put(-1,0){Diamond Tableau}
    \put(2.5,0){Divided Tableau}
    \put(6,0){Two walks}
    \end{picture}
  \caption{The Game of Conjecture \ref{conj:rthin}}
  \label{fig:Game}
  \end{center}
  \end{figure}

  \section{A Question} 

  A partial characterisation of lattice graphs can be extracted from known literature. 
  Indeed, it follows from \cite{HeThes} and \cite{JPM} (see also \cite{FHLLST}) that 
  retracts of products of reflexive paths are exactly the reflexive graphs that admit
  majority, or
  $3$-NU polymorphisms, that is, polymorphisms $f:V(G)^3 \to V(G)$ satisfying  
  \begin{quote}
    $f(x,y,z) = c$ if at least two of $x,y$ and $z$ are $c$. 
  \end{quote}  
   If a reflexive graph has a compatible lattice, then it also admits the
   following majority operation (seen, for example, in \cite{Band}) 
          \[ f(x,y,z) = (x \wedge y) \vee (y \wedge z) \vee (x \wedge z).\]
   Thus all lattice graphs are retractions of products of paths. 
   It would also be nice to see how our characterisations can be use to show this for $\DL$-graphs:
   that every $\DL$-graph is a retract of products of paths. 
   In general, removing a vertex interval $\nVBs$ is not a retraction. 



  \providecommand{\bysame}{\leavevmode\hboxto3em{\hrulefill}\thinspace}

\end{document}